\newtheorem{thm}{Theorem}
\newtheorem{lem}{Lemma}
\newtheorem{clm}{Claim}
\title{A PL-manifold of nonnegative curvature homeomorphic to $S^2 \times S^2$ is a direct metric product
\\
{\em Preliminary version}}
\author{Sergey Orshanskiy}
\date{Spring 2008}
\begin{document}

\maketitle

%\begin{comment}

\begin{abstract}

Let $M^4$ be a PL-manifold of nonnegative curvature that is homeomorphic to a product of two
spheres, $S^2 \times S^2$.
We prove that $M$ is a direct metric product of two spheres endowed with some
polyhedral metrics. In other words, $M$ is a direct metric product of the surfaces of two
convex polyhedra in $\mathbb{R}^3$.

The background for the question is the following.
The classical H.Hopf's hypothesis states: for any Riemannian metric on 
$S^2 \times S^2$ of nonnegative sectional curvature the curvature cannot be strictly positive at all points.
There is no quick answer to this question: it is known that a Riemannian metric on $S^2 \times S^2$ of
nonnegative sectional curvature need not be a product metric. However, 
M.Gromov has pointed out that the condition of nonnegative curvature in the PL-case appears to be stronger
than nonnegative sectional curvature of Riemannian manifolds
and analogous to some condition on the curvature operator. So the motivation for the question addressed in
this text is to settle the PL-version of the Hopf's hypothesis.

\end{abstract}

This paper presents a structure result for polyhedral $4$-manifolds with curvature bounded from below. 
The classical H.Hopf's hypothesis states: for any Riemannian metric on 
$S^2 \times S^2$ of nonnegative sectional curvature the curvature cannot be strictly positive at all points.
There is no quick answer to this question: it is known that a Riemannian metric on $S^2 \times S^2$ of
nonnegative sectional curvature need not be a product metric. The distinction between a condition
on the sectional curvature and a condition on the curvature operator is important.
While it is known that a $4$-manifold with a positive curvature operator is homeomorphic to a sphere,
positive sectional curvature for a $4$-manifold only implies that the fundamental
group is either $\mathbb{Z}^2$ or trivial \cite{bourgignon-75, kuranishi-90}.
For more references, \cite{wilking-2007} is an
extensive survey of similar results for positive and nonnegative curvature.

According to \cite{cheeger-86},
M.Gromov has pointed out that the condition of nonnegative curvature in the PL (polyhedral)-case appears to be stronger than nonnegative sectional curvature of Riemannian manifolds and analogous to some condition on the curvature operator.
As a confirmation of this remark, this paper solves the polyhedral case of the H.Hopf conjecture: a PL-manifold
of nonnegative curvature homeomorphic to $S^2 \times S^2$ is a direct metric product.

To fix the terminology: a PL-manifold is a locally finite simplicial complex all whose simplices are metrically flat
(convex hulls of finite sets of points in a Euclidean space) that is
also a topological manifold. In the compact case, ``locally finite'' implies
``finite'', so we are working with some finite simplicial decomposition.

A (metric) singularity in a PL-manifold $M^n$ is a point $x \in M$ that has no flat neighborhood.
Metric singularities comprise $M_s$, the singular locus. $M \backslash M_s$ is a flat Riemannian manifold.
More specifically, a singularity of codimension $k$ has a neighborhood that is a direct metric product
of an open set in $\mathbb{R}^{n - k}$ with another space, yet no such product for $\mathbb{R}^{n - k + 1}$.
We will be interested in the case when $M$ is also an Alexandrov space of nonnegative curvature.
This condition is known to be equivalent to the following formulation:
the link of $M_s$ at each singularity of codimension $2$ is a circle of length
$< 2\pi$.

Given $M$, a PL-manifold of nonnegative curvature homeomorphic to $S^2 \times S^2$. The claim is that $M$ is a direct metric product.
The proof is carried out in two stages.
Firstly, we establish the existence of two parallel distributions of oriented 2-planes $\alpha$ and
$\beta$ (2-distributions for brevity), foliating  $M \backslash M_s$ and orthogonal to each other.
Secondly, we use these fields of planes to decompose $M$ into a direct metric product and argue that the factors are convex polyhedra in $\mathbb{R}^3$.

\subsection*{Acknowledgements}
I am grateful to my thesis adviser, Professor Dmitri Burago for introducing me to the questions addressed
in this article and specifically for pointing out the importance of Cheeger's results.

\section{Finding parallel 2-distributions}

In this section we remove all singularities from our consideration and
focus on $M \backslash M_s$, a flat Riemannian manifold. The goal is to
find two parallel distributions of oriented 2-planes on $M \backslash M_s$.
This is the main step towards factoring $M$ as a direct metric product.
The main tool here are J.Cheeger's results for polyhedral spaces of nonnegative
curvature. His results are stated in the language of differential forms
(and this is why we are focusing on $M \backslash M_s$, as differential
forms on $M$ are not well-defined).

Since $M \backslash M_s$ is a flat Riemannian manifold, one can indeed study differential forms on it.
Every parallel form (i.e. $\nabla \omega = 0$) on $M \backslash M_s$ is harmonic, $L_2$, closed and
co-closed, as is verified by taking the differential and the codifferential in local (flat)
coordinates and integrating in local coordinates (there is a finite flat atlas coming from the
PL-structure).

The situation is considerably better because of J. Cheeger's results for (in particular) PL-manifolds
of nonnegative curvature \cite{cheeger-86}. We are using his results in the following form:
\begin{thm}[J.Cheeger]
Let $M^n$ be a PL-manifold of nonnegative curvature. Let $H^i$ be the space of
$L_2$-harmonic forms on $M \backslash M_s$ that are closed and co-closed. Then $dim H^i = b^i(M)$.
Moreover, all forms in $H^i$ are parallel.
\end{thm}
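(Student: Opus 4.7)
The plan is to set up $L^2$-Hodge theory on the open flat manifold $M^\circ := M \setminus M_s$, identify the resulting space of harmonic forms with the topological cohomology of $M$, and then apply a Bochner-type identity to upgrade harmonic forms to parallel ones.

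For the Hodge-theoretic step, each singular stratum admits a neighborhood metrically modeled on the product of a Euclidean factor with the cone over a spherical space form (in codimension $2$, a cone over a circle of length strictly less than $2\pi$). I would invoke Cheeger's analysis of the Laplacian on such conical pieces to show that the minimal and maximal closed extensions of $d$ on $L^2$ forms agree, producing a Hodge decomposition
\[
L^2 \Omega^i(M^\circ) = H^i \oplus \overline{\operatorname{im} d} \oplus \overline{\operatorname{im} \delta},
\]
where $H^i$ is precisely the space of $L^2$ forms that are both closed and co-closed. To match $\dim H^i$ with $b^i(M)$, I would use that $M$ is a topological manifold, so the link of each singular stratum is a topological sphere; hence intersection cohomology of $M$ (for any perversity) coincides with ordinary cohomology, and Cheeger's identification of $L^2$-cohomology with intersection cohomology on iterated conical pseudomanifolds then yields $\dim H^i = b^i(M)$.

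The parallelism assertion is where the curvature hypothesis is truly used. Since $M^\circ$ is flat, the Bochner--Weitzenb\"ock formula reduces to $\Delta \omega = \nabla^*\nabla \omega$, so formally pairing with a harmonic $\omega$ and integrating over $M^\circ$ would give $\|\nabla\omega\|_{L^2}^2 = 0$ immediately. The real content is justifying this integration by parts on the incomplete manifold $M^\circ$: one exhausts $M^\circ$ by compact domains $K_\varepsilon = M \setminus N_\varepsilon(M_s)$, applies the Bochner identity on each $K_\varepsilon$, and controls the resulting boundary integrals over $\partial N_\varepsilon(M_s)$ as $\varepsilon \to 0$. The cone-angle bound (link length $< 2\pi$ at codimension-$2$ singularities) is exactly what is needed to force these boundary contributions to vanish --- without it, Cheeger's cone examples show that $L^2$-harmonic forms can fail to be parallel.

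The main obstacle I anticipate is precisely this last step: extracting usable decay of the boundary flux from the local cone-angle condition. The approach I would take is to expand $\omega$ along each codimension-$2$ singular arc in a Fourier series on the link circle and invoke Cheeger's eigenfunction analysis of the Laplacian on the metric cone; the condition ``link length $< 2\pi$'' translates into a spectral gap on the cone cross-section that controls the pointwise size of $\omega$ and $\nabla\omega$ near the stratum, forcing $\int_{\partial N_\varepsilon} \langle \omega, *\nabla\omega\rangle \to 0$. Strata of codimension $\ge 3$ contribute negligibly thanks to favorable Hausdorff-measure scaling of the tubular boundary. Once these boundary terms are shown to vanish in the limit, the Bochner identity yields $\nabla\omega=0$ globally on $M^\circ$, completing the proof.
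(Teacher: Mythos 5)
The paper does not actually prove this theorem---it is quoted as a black box from Cheeger's work (the citation \texttt{cheeger-86}, presumably his paper on $L^2$-cohomology and vanishing theorems for piecewise flat spaces), and the rest of the paper simply applies it. So there is no internal proof for your sketch to be measured against; the honest comparison is with Cheeger's own argument.

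Your outline is a sensible reconstruction of the strategy: split the statement into the Hodge-theoretic identification $\dim H^i = b^i(M)$, which uses only that $M$ is a topological manifold with conical PL metric (links are PL spheres, so $L^2$-cohomology $=$ intersection cohomology $=$ ordinary cohomology), and the parallelism, which is where nonnegative curvature enters through a Bochner identity on the flat open part. That division of labor is correct and matches the structure of Cheeger's proof. The weak point is exactly the one you flag yourself: you have not actually established that the boundary flux $\int_{\partial N_\varepsilon}\langle\omega, *\nabla\omega\rangle$ tends to zero. For codimension-$2$ strata the Fourier-mode argument you gesture at (spectral gap on a circle of length $< 2\pi$) does work, but the claim that codimension $\ge 3$ strata ``contribute negligibly by Hausdorff-measure scaling'' is not justified as stated: harmonic forms near a cone point can a priori blow up at a rate that competes with the shrinking boundary measure, and Cheeger's treatment handles this by an induction on strata depth using the spectral geometry of the links (which are themselves PL spheres of curvature $\ge 1$), not by a crude volume count. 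Until that boundary estimate is carried out stratum by stratum, what you have is a plausible plan rather than a proof.
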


%\todo{the latter fact is actually proved in a previous paper, ``On the Hodge %theory of Riemannian
%pseudomanifolds'', A.M.S. Proc. Pure Math, Vol. XXXVI, 1980, 91-146}

What it means for our present discussion, given that $b^2(S^2 \times S^2) = 2$, is that the vector space of parallel forms on $M \backslash M_s$ is $2$-dimensional. Pick a basis $\{\omega_1, \omega_2\}$ for this vector space.
Thus, we have $\omega_1$ and $\omega_2$ --- two parallel 2-forms on $M \backslash M_s$ that are linearly independent (not proportional to each other). We are going to do some linear algebra with these forms in order to
obtain two mutually orthogonal 2-distributions (parallel fields of oriented 2-planes). This will prove
\begin{clm}
\label{clm:two-fields-of-planes}
Let $M$ be a PL-manifold of nonnegative curvature, homeomorphic to 
$S^2 \times S^2$ and let $M_s$ be the singular locus of $M$.
Then there are two mutually orthogonal parallel fields of oriented 2-planes
on $M \backslash M_s$.
\end{clm}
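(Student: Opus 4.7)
The plan is to do linear algebra with the Hodge star acting on the $2$-dimensional space $H^2$ of parallel $2$-forms produced by Cheeger's theorem, pointwise on $\Lambda^2 \mathbb{R}^4$.

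First, since $S^2 \times S^2$ is orientable and $M_s$ has codimension at least $2$, the smooth flat manifold $M \backslash M_s$ inherits a Riemannian orientation, so the Hodge star $\ast$ is a well-defined parallel endomorphism of $\Lambda^2 T^*(M \backslash M_s)$ with $\ast^2 = \mathrm{id}$, whose $\pm 1$-eigenbundles $\Lambda^2_\pm$ are the self-dual and anti-self-dual $2$-forms (each of rank $3$). Because $\ast$ is parallel, it preserves the space of parallel forms, producing a splitting $H^2 = H^2_+ \oplus H^2_-$ with $\dim H^2_+ + \dim H^2_- = 2$.

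Second, I claim $\dim H^2_+ = \dim H^2_- = 1$. Suppose for contradiction that $\dim H^2_+ = 2$. Parallel forms are precisely those pointwise-fixed by the restricted holonomy $\mathrm{Hol} \subseteq SO(4)$. Via the isomorphism $SO(4) \cong (SU(2)_+ \times SU(2)_-)/\mathbb{Z}_2$, the action of $SO(4)$ on $\Lambda^2_+$ factors through the standard $SO(3)$-representation on $\Lambda^2_+ \cong \mathbb{R}^3$. If $\mathrm{Hol}$ fixed a $2$-dimensional subspace of $\Lambda^2_+$ pointwise, then every element of its image in $SO(3)$ would fix two linearly independent vectors and hence be the identity, so $\mathrm{Hol}$ would act trivially on all of $\Lambda^2_+$. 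Then every self-dual $2$-form would be parallel, giving $\dim H^2 \geq 3$ and contradicting $b^2(S^2 \times S^2) = 2$. The symmetric argument rules out $\dim H^2_- = 2$, so both summands are $1$-dimensional.

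Third, pick nonzero $\omega_+ \in H^2_+$ and $\omega_- \in H^2_-$. Their pointwise norms are constant (since they are parallel), and after rescaling I may assume $|\omega_+| = |\omega_-|$. Using the standard identity $\omega \wedge \omega = (|\omega_+|^2 - |\omega_-|^2)\,\mathrm{vol}$ for $\omega = \omega_+ + \omega_-$, both $\alpha := \omega_+ + \omega_-$ and $\beta := \omega_+ - \omega_-$ satisfy $\alpha \wedge \alpha = \beta \wedge \beta = 0$. Hence $\alpha$ and $\beta$ are simple (decomposable) $2$-forms at every point; being parallel and of constant nonzero norm, each determines a parallel field of oriented $2$-planes on $M \backslash M_s$. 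Finally, $\ast \alpha = \omega_+ - \omega_- = \beta$, and the Hodge dual of the area form of an oriented $2$-plane $P$ is the area form of the orthogonal complement $P^\perp$ with the induced orientation, so the two parallel $2$-distributions are mutually orthogonal, proving the claim.

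The substantive step is the second paragraph, ruling out the degenerate cases where $H^2$ lies entirely inside $\Lambda^2_+$ or entirely inside $\Lambda^2_-$; this is where the branching of $SO(4)$ into two $SU(2)$ factors, together with the Betti number $b^2 = 2$, really gets used. Everything else is pointwise linear algebra on $\Lambda^2 \mathbb{R}^4$.
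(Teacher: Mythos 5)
Your proof is correct, and it is, at bottom, the same mathematical idea as the paper's, but packaged much more invariantly. The paper proceeds by contradiction, writes everything in a flat orthonormal frame, and classifies orthogonal antisymmetric $4\times 4$ matrices into ``matrices of the first kind'' and ``matrices of the second kind'' (Lemma~\ref{lem:orthogonal-antisymmetric-two-kinds}); a quick check shows these two families are exactly the unit spheres in $\Lambda^2_+$ and $\Lambda^2_-$ respectively, so the paper's four technical lemmas are a hands-on rediscovery of the self-dual splitting $\Lambda^2 = \Lambda^2_+ \oplus \Lambda^2_-$. Your step~2 (if $\mathrm{Hol}$ fixes two independent self-dual forms then it fixes all of $\Lambda^2_+$, forcing $b^2 \geq 3$) is the invariant version of the paper's Lemma~\ref{lem:commuting-two-first-kind-su2} plus the $\Omega_3$ argument: commuting with two linearly independent matrices of the first kind lands the holonomy in $SU(2)_-$, which then commutes with every matrix of the first kind, producing a third parallel form and the same contradiction with $b^2(S^2\times S^2)=2$. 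Your step~3 is also parallel to the paper's Lemma~\ref{lem:linear-first-second-repeating-eigenvalues}: the paper shows that $\lambda J + \mu B$ (one form from each half) has distinct eigenvalues, while you observe that after rescaling the two halves to equal norm, $\omega_+ \pm \omega_-$ are simple via $\omega\wedge\omega = (|\omega_+|^2 - |\omega_-|^2)\mathrm{vol}$; either observation yields the two orthogonal oriented $2$-plane fields. The tradeoff: the paper's route is entirely elementary matrix algebra, self-contained at the level of a linear-algebra course, whereas your route is shorter and exposes the representation-theoretic reason the argument works ($SO(4)$ acting on $\Lambda^2_\pm$ through $SO(3)$), at the cost of invoking standard facts about the Hodge star, the self-dual decomposition, and the $SO(4)\cong(SU(2)\times SU(2))/\mathbb{Z}_2$ isomorphism without re-deriving them. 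Both proofs use Cheeger's theorem in exactly the same way, and both produce the same pair of $2$-distributions.
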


\begin{proof}[Proof of claim]
A parallel 2-form on a flat Riemannian manifold has a well-defined notion of
eigenvalues. If we can find a parallel antisymmetric 2-form on $M \backslash M_s$ with four distinct eigenvalues, $\pm ai \neq \pm bi$, this form immediately
gives rise to two fields of planes. One of the (fields of) planes is given by $\{v \in T_x M~|~ \exists w:~\omega(v, w) = \max(a, b)\cdot\|v\|\cdot\|w\|\}$. The other is the orthogonal complement (and also the eigenspace corresponding to the smaller eigenvalue). Since $a$ and $b$ cannot both be $0$, one of the planes acquires orientation from the form $\omega$ itself. The other can be
oriented using the orientation of its orthogonal complement.

Let $\omega_1$ and $\omega_2$ be the two 2-forms on $M \backslash M_s$ not proportional to each other, that came from the Cheeger's results for nonnegative curvature. Assume for a contradiction that all real linear
combinations of these two forms have repeating eigenvalues (otherwise we would
immediately obtain two families of planes, as desired). The contradiction
will become clear from simple linear algebra done in local flat coordinates.

The four following lemmas (\ref{lem:antisymmetric-multiple-orthogonal},
\ref{lem:orthogonal-antisymmetric-two-kinds}, 
\ref{lem:linear-first-second-repeating-eigenvalues}, 
\ref{lem:commuting-two-first-kind-su2}) are technical and straightforward, and
are only used to prove Claim~\ref{clm:two-fields-of-planes}.

\begin{lem}
\label{lem:antisymmetric-multiple-orthogonal}
If $\omega$ is an antisymmetric 2-form with repeating eigenvalues defined on a $4_{\mathbb{R}}$-dimensional
vector space, then  its matrix is a scalar multiple of an orthogonal matrix.
\end{lem}
\begin{proof}
If the form is nonzero, rescale it to make the eigenvalues equal to $\pm i$
(each with multiplicity $2$). The resulting form is given by a matrix $A$.
The form is antisymmetric, so $A^T = -A$. Therefore $A$ can be diagonalized via
some unitary matrix, $U^\ast A U = 
\left[
\begin{array}
{cccc}
i & 0 & 0 & 0 \\
0 & i & 0 & 0 \\
0 & 0 & -i & 0 \\
0 & 0 & 0 & -i
\end{array}
\right] \in U(4)$

So $A$ is unitary itself and real-valued, hence orthogonal.
\end{proof}

\begin{lem}
\label{lem:orthogonal-antisymmetric-two-kinds}
If $A$ is a $4 \times 4$ real-valued matrix that is also orthogonal and antisymmetric, then either

$A = 
\left[
\begin{array}{rrrr}
0 & a & b & c \\
-a & 0 & c & -b \\
-b & -c & 0 & a \\
-c & b & -a & 0
\end{array}
\right]$ (a matrix of the first kind)

or

$A = 
\left[
\begin{array}{rrrr}
0 & a & b & c \\
-a & 0 & -c & b \\
-b & c & 0 & -a \\
-c & -b & a & 0
\end{array}
\right]$ (a matrix of the second kind)

for some real numbers $a, b, c$ satisfying $a^2 + b^2 + c^2 = 1$.
\end{lem}
\begin{proof}
The proof is straightforward. Start from an orthogonal matrix of the form
$\left[
\begin{array}{rrrr}
0  &  a & b  & c \\
-a &  0 & d  & e \\
-b & -d & 0  & f \\
-c & -e & -f & 0
\end{array}
\right]$

The columns are normalized:
\begin{eqnarray*}
  a^2 + b^2 + c^2 = 1 \\
  a^2 + d^2 + e^2 = 1 \\
  b^2 + d^2 + f^2 = 1 \\
  c^2 + e^2 + f^2 = 1
\end{eqnarray*}

Consequently, $a^2 = f^2$, $b^2 = e^2$, and $c^2 = d^2$.

The columns are orthogonal to each other:
\begin{eqnarray*}
  bd + ce = 0 \\
  ad = cf \\
  ae + bf = 0 \\
  ab + ef = 0 \\
  ac = df \\
  bc + de = 0
\end{eqnarray*}

Recall that $a^2 = f^2$.
The case $a = f = 0$ is easy. Assume this is not the case.
If $a = f$, then $c = d$ and $b = -e$, so the matrix is of the first kind.
If $a = -f$, then $b = e$ and $ c = -d$, and the matrix is of the second kind.
The two kinds are easily seen to be mutually exclusive.
\end{proof}

\begin{lem}
\label{lem:linear-first-second-repeating-eigenvalues}
Let $J =
\left[
\begin{array}{cccc}
0 & 1 & 0 & 0 \\
-1 & 0 & 0 & 0 \\
0 & 0 & 0 & 1 \\
0 & 0 & -1 & 0
\end{array}
\right]$, a particular matrix of the first kind.
If $B$ is a matrix of the second kind (as above) and $\lambda, \mu$ are both nonzero real numbers, then $\lambda J + \mu B$ is an antisymmetric matrix with distinct eigenvalues. 
\end{lem}
\begin{proof}
$\lambda J + \mu B =
\left[
\begin{array}{rrrr}
0 & \lambda + \mu a & \mu b & \mu c \\
-\lambda - \mu a & 0 & -\mu c & \mu b \\
-\mu b & \mu c & 0 & \lambda - \mu a \\
-\mu c & -\mu b & -\lambda + \mu a & 0
\end{array}
\right]$

This matrix is antisymmetric and it is never a multiple of an orthogonal matrix
(given that $\lambda, \mu \neq 0$). If $a \neq 0$, compare the norms of different columns. If $a = 0$ and $b \neq 0$, take the dot product of the first column with the fourth column. If $a = 0$ and $c \neq 0$, take the dot product
of the first column with the third column. 
Either way, the matrix has to have distinct eigenvalues by Lemma~\ref{lem:antisymmetric-multiple-orthogonal}
\end{proof}

\begin{lem}
\label{lem:commuting-two-first-kind-su2}
Let $J$ be as above and let $C$ be another matrix of the first kind, not
a multiple of $J$ ($a \neq \pm 1)$. Let $G \subset O(4)$ be the group
of all orthogonal matrices commuting with $J$ and $C$:
$G = \{A | A \in O(4), AJ = JA, AC = CA\}$. Then $G = SU(2)\subset SO(4)$. 
\end{lem}
\begin{proof}
For notational convenience, certain $2 \times 2$ matrices can be abbreviated as complex numbers:
$\left[
\begin{array}{rr}
a & -b \\
b & a
\end{array}
\right]
\leftrightarrow
a + bi.
$
Also let $\beta =
\left[
\begin{array}{rr}
0 & 1 \\
1 & 0
\end{array}
\right]$.

In this notation, a matrix of the first kind
$C = 
\left[
\begin{array}{rrrr}
0 & a & b & c \\
-a & 0 & c & -b \\
-b & -c & 0 & a \\
-c & b & -a & 0
\end{array}
\right]
=
\left[
\begin{array}{cc}
-ia & (c - ib)\beta = \beta(c + ib) \\
(-c + ib) \beta = \beta (-c - ib) & -ia 
\end{array}
\right]$.

$AJ = JA$ is equivalent to saying that
$A = 
\left[
\begin{array}{rr}
z_1 & z_2 \\
z_3 & z_4 
\end{array}
\right]$ for some four complex numbers
$z_1, z_2, z_3, z_4$. We are also given that $AC = CA$.

$AC = 
\left[
\begin{array}{rr}
-z_1 ia + z_2(-c + ib) \beta & z_1 (c - ib) \beta - z_2 ia \\
-z_3 ia + z_4(-c + ib) \beta & z_3 (c - ib) \beta - z_4 ia
\end{array}
\right] =
$

$CA = \left[
\begin{array}{rr}
-z_1 ia + \beta (c + ib) z_3 & -z_2 ia + \beta (c + ib) z_4 \\
\beta (-c - ib) z_1 - z_3 ia & \beta (-c - ib) z_2 - z_4 ia
\end{array}
\right]$

If $z$ is a complex number, then clearly $\beta z \beta = \overline{z}$ --- the complex conjugate of $z$. Also notice that $\beta^2 = 1$.
Then $AC = CA$ is equivalent to four conditions:

\begin{eqnarray*}
\overline{z_2 (-c + ib)} = (c + ib) z_3 \\
\overline{z_1 (c - ib)} = (c + ib) z_4 \\
\overline{z_4 (-c + ib)} = (-c - ib) z_1 \\
\overline{z_3 (c - ib)} = (-c - ib) z_2
\end{eqnarray*}

Equivalently, $\overline{z_2} = -z_3$ and $\overline{z_1} = z_4$
(we assumed that $a \neq \pm 1$, so $-c \pm ib \neq 0$). The orthogonality of $A$ gives the normalization: $|z_1|^2 + |z_2|^2 = |z_3|^2 +
|z_4|^2 = 1$. Lastly, $G = SU(2)\subset SO(4)$ is precisely the set of matrices
of the form
$\left[
\begin{array}{rr}
z_1 & z_2 \\
-\overline{z_2} & \overline{z_1}
\end{array}
\right]$ satisfying $|z_1|^2 + |z_2|^2 = 1$.

\end{proof}

Any parallel form is preserved by holonomies. Writing down $\omega_1$ and $\omega_2$ in local flat coordinates (and rescaling both of them, if necessary) one obtains two orthogonal antisymmetric $4 \times 4$ matrices
$\Omega_1$ and $\Omega_2$ that are not proportional to each other (by 
Lemma~\ref{lem:antisymmetric-multiple-orthogonal} --- recall that by assumption both forms have repeating eigenvalues and so do not immediately give us a parallel field of oriented 2-planes). By classification from 
Lemma~\ref{lem:orthogonal-antisymmetric-two-kinds}, there are two kinds of such matrices. Without loss of generality, $\Omega_1 = J$ 
--- a particular matrix of the first kind. 
Indeed, one can make an appropriate orthogonal change of coordinates, as $\Omega_1$ is a real-valued matrix of a normal operator with imaginary eigenvalues.
If $\Omega_2$ is a matrix of the second kind,
then $\lambda \Omega_1 + \mu \Omega_2$ has distinct eigenvalues for $\lambda, \mu \neq 0$ (Lemma~\ref{lem:linear-first-second-repeating-eigenvalues}). 
Otherwise note that all scalar multiples of the matrices of the first kind constitute a $3$-dimensional subspace of all real-valued matrices $4 \times 4$.
Choose $\Omega_3$ that is linearly independent with the previous two matrices
and appropriately rescale it in order to make it orthogonal and so a matrix of the first kind, too.

Parallel forms $\omega_1$ and $\omega_2$ are preserved
by holonomies and so any matrix in the image of the holonomies in $SO(4)$ commutes with 
$\Omega_1$ and $\Omega_2$.
Any orthogonal matrix commuting with $\Omega_1$ and $\Omega_2$ is in
$SU(2)$ and so has to commute with $\Omega_3$ (use Lemma~\ref{lem:commuting-two-first-kind-su2} two times).
Then any matrix in the image of the holonomies in $SO(4)$ has to commute with $\Omega_3$
as well.
We can obtain a third parallel antisymmetric 2-form $\omega_3$ 
from the form given by the matrix $\Omega_3$ by parallel-translating it to all other points
of $M \backslash M_s$.
The new form $\omega_3$ is
linearly independent with the previous two, leading to the desired
contradiction. Indeed, it has already been established that the space of such forms is $2$-dimensional as a consequence of $b^2(S^2 \times S^2) = 2$.
This proves Claim~\ref{clm:two-fields-of-planes} about the existence of
the desired 2-distributions (two parallel fields of oriented 2-planes
on $M \backslash M_s$ orthogonal to each other).
\end{proof}

These two 2-distributions allow us to give a more specific description of
all parallel 2-forms on $M \backslash M_s$.
Clearly, the signed areas of the projections onto the first
and the second of the planes that we have just found constitute two parallel degenerate 2-forms
that are not proportional to each other. Hence, they span $H^2(M)$. 
In appropriate local coordinates these two forms are just $dx_1 \wedge dx_2$ and
$dx_3 \wedge dx_4$, respectively. Two 2-distributions can be thought of
as the kernels of these two forms. The sum of these two forms,
$dx_1 \wedge dx_2 + dx_3 \wedge dx_4$ is a symplectic form with 
repeating eigenvalues $\pm i$. This form yields a pseudocomplex structure on
$M \backslash M_s$, so

\begin{lem}
\label{lem:M-is-PL-Kahler}
$M$ is a polyhedral K\"ahler manifold (see \cite{panov-06} for the definition).
\end{lem}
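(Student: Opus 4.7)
The plan is to exhibit a parallel integrable almost complex structure $J$ on $M \setminus M_s$ and then to verify that the singular locus and the cone structure transverse to it satisfy Panov's axioms.

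First I would combine the parallel symplectic form $\omega = dx_1 \wedge dx_2 + dx_3 \wedge dx_4$ with the flat Riemannian metric $g$ to define $J$ by $g(Jv,w) = \omega(v,w)$. In the local coordinates above, $J$ is block-diagonal with two copies of the standard $90^\circ$ rotation, so $J^2 = -\mathrm{Id}$ and $J \in SO(4)$. Since $\omega$ and $g$ are both parallel, $\nabla J = 0$; on a flat manifold this forces the Nijenhuis tensor to vanish, so $J$ is integrable, and combined with compatibility with $g$ and the closedness of $\omega$ this gives an honest flat K\"ahler structure on $M \setminus M_s$.

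Next I would analyze the holonomy around each codimension-$2$ singular stratum. Such a holonomy is an $SO(4)$ rotation by the corresponding cone angle, and since it must preserve both $\omega_1$ and $\omega_2$ (and hence $J$) it lies in the unitary subgroup $U(2) \subset SO(4)$. A rotation in $U(2)$ whose fixed set is a real $2$-plane forces both the fixed plane and its orthogonal complement to be $J$-invariant, i.e.\ complex lines, so each codimension-$2$ stratum is locally a complex hypersurface and the transverse cone is a standard $2$-dimensional polyhedral cone (a cone over a circle of length less than $2\pi$). Higher-codimension strata then appear as transverse intersections of such complex hypersurfaces and inherit a compatible local complex structure.

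The main obstacle is matching Panov's exact axiomatization: one must check not merely that $M \setminus M_s$ is flat K\"ahler and that $M_s$ is locally modeled on a complex hyperplane arrangement, but also that the transverse cone metric at each codimension-$2$ stratum is of the polyhedral K\"ahler type in each complex normal direction and that the locally defined complex structures assemble into a globally consistent one. The assembly is essentially automatic because $J$ has been constructed globally on $M \setminus M_s$ as a parallel tensor with holonomy in $U(2)$, but verifying the precise cone-angle hypothesis at higher-codimension strata, and formally unpacking the definition in \cite{panov-06}, is the technical point that requires care.
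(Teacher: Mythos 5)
Your proposal uses the same key step as the paper: the holonomy preserves the parallel $2$-form $\omega = dx_1 \wedge dx_2 + dx_3 \wedge dx_4$, and the orthogonal matrices commuting with the associated $J$ are exactly $O(4) \cap GL(2,\mathbb{C}) = U(2)$, so the holonomy lies in a subgroup conjugate to $U(2) \subset SO(4)$. That containment is already the whole of Panov's definition of a polyhedral K\"ahler manifold, so the paper stops there; your further discussion of the Nijenhuis tensor, the explicit flat K\"ahler structure on $M \setminus M_s$, and the complex-hypersurface structure of the singular strata is correct but not needed to close the lemma (Panov derives those consequences \emph{from} the holonomy condition rather than requiring them as separate axioms).
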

\begin{proof}
The matrices representing the holonomies preserving the form $dx_1 \wedge dx_2 +
dx_3 \wedge dx_4$ commute with $J$ (in the appropriate positively oriented orthogonal basis, where $J$ is the matrix of the 2-form $dx_1 \wedge dx_2 +
dx_3 \wedge dx_4$).
Commuting with $J$ is equivalent to being in $GL(2, \mathbb{C})$. However,
$O(4) \cap GL(2, \mathbb{C}) = U(2)$. (If the basis turns out to be negatively
oriented, use $dx_1 \wedge dx_2 - dx_3 \wedge dx_4$ instead.)
Thus, the image of the holonomies of $M$
is in a subgroup of $SO(4)$, conjugate to $U(2)$ --- precisely what the definiton of a polyhedral K\"ahler manifold says. 
(Note that this image is in a subgroup conjugate to $U(2)$, \textbf{but not} to $SU(2)$!)
\end{proof}

%\begin{prp}[The following statements are easy to prove]
%The singular locus of codimension $k$ is a subset of $\Sigma^{n - k}$. The %singular locus of
%codimension $k$ is locally isometric to $\mathbb{R}^{n - k}$. The locus of %singularities
%of codimension at least $k$ consists of several ($n - k$)-simplices and in %particular is a closed set.
%\end{prp}

\section{Decomposing $M$ into a product}

The two 2-distributions we have found behave nicely, but are defined only on the
nonsingular part $M \backslash M_s$. While we expect $M \backslash M_s$ to be a direct product too, it is easier to factor $M$ as a whole. 
Fortunately, these two 2-distributions allow us to discern the product structure at the singularities from
$M_s$ and, moreover, the 2-distributions turn out to be parallel (respectively, perpendicular) to
the fibers of this local product structure.

\subsection{Classifying singularities}
If you want to skip the details of this local analysis, go straight
to the conclusion (Lemma~\ref{lem:local-product-decomposition-of-M}).

Since $M$ is a polyhedral K\"ahler manifold as established above
(Lemma~\ref{lem:M-is-PL-Kahler}), we can use the following result from \cite{panov-06}:
\begin{lem}[D.Panov]
\label{lem:no-codimension-3}
Let $M^4$ be a $4_{\mathbb{R}}$-dimensional polyhedral K\"ahler manifold. Then there are no singularities of codimension $3$ (all singularities have to have codimension $2$ or $4$).
\end{lem}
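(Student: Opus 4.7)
The plan is to argue by contradiction. Suppose $x \in M$ is a singularity of codimension $3$. By the definition of codimension, some metric neighborhood $U$ of $x$ splits as a direct metric product $\mathbb{R} \times C(L)$, where $C(L)$ is the metric cone on a compact $2$-dimensional space $L$, and $U$ admits no further $\mathbb{R}$ factor. Because $M$ is a topological $4$-manifold, $L$ must be topologically $S^2$; the nonnegative curvature hypothesis on $M$ forces $L$ to be Alexandrov of curvature $\geq 1$.

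The first step is to exploit the $\mathbb{R}$ factor: unit translation along it furnishes a parallel unit vector field $V$ defined on all of $U$, including at $x$. On the regular part $U \setminus M_s$, the parallel almost complex structure $J$ coming from the polyhedral K\"ahler structure (Lemma~\ref{lem:M-is-PL-Kahler}) produces a second parallel vector field $JV$, orthogonal to $V$.

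The second step is to show that $V$ and $JV$ together give a metric splitting of $U$ of the form $\mathbb{R}^2 \times N^2$. Since $V$ generates the $\mathbb{R}$ factor, it is preserved by every holonomy of $U \setminus M_s$; since $J$ is parallel, it commutes with every holonomy. Consequently $JV$ is holonomy-invariant, and the $2$-plane field $\langle V, JV \rangle$ is a parallel, holonomy-invariant distribution on $U \setminus M_s$. A de~Rham style splitting argument, adapted to the polyhedral setting, then upgrades this distribution to a local metric product $U \cong \mathbb{R}^2 \times N^2$. This contradicts the assumption that the codimension at $x$ is exactly $3$, which by definition forbids extracting an $\mathbb{R}^2$ factor. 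Hence no codimension-$3$ singularity exists.

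The main obstacle is making the splitting of step two rigorous: one must extend $JV$ continuously across the singular locus of $U$ (a priori it is defined only on $U \setminus M_s$) and then invoke a de~Rham type theorem for polyhedral Alexandrov spaces of nonnegative curvature to convert a holonomy-invariant parallel distribution into a genuine metric product. The flat $\mathbb{R}$-axis through $x$ guaranteed by the codim-$3$ hypothesis is what allows $V$ to be defined at $x$ itself, and one then uses that near $x$ the holonomies lie in $U(2)$ and fix the distinguished direction $V$, forcing $JV$ to extend as well. It is exactly this interplay between the flat $\mathbb{R}$-direction and the K\"ahler complex structure that forbids singularities from sitting in a single real dimension of non-product behavior, so the codimension must be $2$ or $4$.
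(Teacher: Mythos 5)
Your approach is genuinely different from the paper's. The paper simply cites Panov's Proposition 3.3, whose proof (as the paper summarizes) rests on showing that the singular locus is a \emph{holomorphic} subspace of the polyhedral K\"ahler manifold and hence of even real codimension, with some Morse theory to control its structure. You instead try an Alexandrov-geometric route: use the $\mathbb{R}$-factor guaranteed by the codimension-$3$ hypothesis to get a parallel field $V$, apply $J$ to get $JV$, and split off an $\mathbb{R}^2$. The underlying idea is sound and fits nicely with the techniques used elsewhere in the paper (compare the splitting argument in the proof of the lemma classifying codimension-$4$ singularities), so as a substitute for the citation it is a reasonable program.

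However, there is a genuine gap exactly where you flag the ``main obstacle.'' Having a parallel, holonomy-invariant vector field $JV$ on $U \setminus M_s$ is \emph{not} enough to invoke a de~Rham decomposition: that theorem requires a complete simply connected Riemannian manifold, and $U \setminus M_s$ is neither complete nor (in general) simply connected. What you actually need is the Alexandrov splitting theorem, which requires producing a \emph{line}, i.e.\ a doubly infinite geodesic. To get that from $JV$ you must argue that the integral curves of $JV$ never run into the singular locus. This does work, but it is real content: near each codimension-$2$ singular component the holonomy around it fixes only $\mathrm{span}(V,\text{ray direction})$, so $JV$ is forced to be parallel to the ray; hence the $JV$-geodesics stay at positive distance from both the rays and the apex and are therefore complete. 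None of this is in your sketch, and calling it a ``de~Rham type theorem'' obscures what is actually needed. Also note that your argument uses the nonnegative-curvature hypothesis (curvature $\ge 1$ on the link, the Alexandrov splitting theorem), whereas the lemma as stated and Panov's proof are for arbitrary polyhedral K\"ahler $4$-manifolds; your proof therefore gives a weaker statement, which happens to be all the paper needs, but the reader should be told this explicitly. Finally, a small presentational point: since the lemma hypothesizes that $M$ is polyhedral K\"ahler, you should appeal directly to that hypothesis for $J$ rather than re-derive it from Lemma~\ref{lem:M-is-PL-Kahler}, which concerns the specific $M$ of the main theorem.
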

\begin{proof}
The proof uses the fact that the singular locus of $M$ is a holomorphic subspace of $M$ in the sense of K\"ahler structure, and also some Morse theory.
See \cite[proposition 3.3]{panov-06}.
\end{proof}

There can only be finitely many singularities of codimension $4$
(they all have to be vertices of the simplicial decomposition of $M$).
The locus of singularities of codimension $2$ with the induced intrinsic metric is a flat $2$-dimensional manifold (since near every singularity of codimension $2$ $M$ can be decomposed into a product of a flat space with a $2$-cone) that is also a subset of the two-skeleton $\Sigma^2$. It remains to add that codimension $4$ singularities cannot be isolated from the rest of the singular locus.

\begin{lem}
\label{lem:flat-pinched-no-singularity}
A singular point $x \in M_s$ of any codimension cannot be isolated (from the rest of the singular locus).
In other words, if a point in a $4$-dimensional PL-manifold has a flat pinched neighborhood, this point is not a singularity (has a flat neighborhood).
\end{lem}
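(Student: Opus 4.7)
The plan is to examine the link $L = \Sigma_x M$ of a putatively isolated singularity and use the topological manifold hypothesis together with the flat pinched hypothesis to force $L$ to be the round $S^3$, which will contradict $x$ being singular. First I would reduce to the only nontrivial case: by Lemma~\ref{lem:no-codimension-3} there are no codimension-$3$ singularities, and the codimension-$2$ singularities sit on a flat $2$-dimensional submanifold of $M$ that has no isolated points of its own, so an isolated singular point in $M_s$ must have codimension $4$.

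Fix such an $x$ and a neighborhood $U$ on which $U \setminus \{x\}$ is a flat Riemannian $4$-manifold. The key observation is that $L$ carries two pieces of structure that together pin it down. Topologically, $M$ is by definition a topological $4$-manifold, so $L$ is homeomorphic to $S^3$ and in particular simply connected. Metrically, since $x$ is a vertex of the simplicial decomposition, a small punctured ball around $x$ is isometric to a truncated metric cone with warped metric $dr^2 + r^2 g_L$; the standard relation between the curvature of a metric cone and the curvature of its base shows that flatness of the punctured neighborhood is equivalent to $g_L$ being Riemannian of constant sectional curvature $+1$. Hence $L$ is a closed, simply connected Riemannian $3$-manifold of constant curvature $+1$.

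By the Killing--Hopf theorem, $L$ is then isometric to the standard round unit $S^3$. Consequently the tangent cone at $x$ is isometric to flat $\mathbb{R}^4$, and in the PL setting this is enough to conclude that a small neighborhood of $x$ itself is isometric to a Euclidean ball, contradicting $x \in M_s$. The main obstacle is the combined metric-topology analysis of the link: once one verifies carefully that the PL link of an isolated vertex, equipped with the induced metric, is simultaneously a topological $S^3$ and a $(+1)$-constant-curvature Riemannian $3$-manifold, Killing--Hopf closes the argument immediately. It is worth noting that neither the specific topology $S^2 \times S^2$ nor the K\"ahler structure enters this argument --- it uses only the hypothesis that $M$ is a PL-manifold in the sense of this paper (i.e., a topological manifold with a flat simplicial metric).
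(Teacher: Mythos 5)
Your argument is correct and reaches the same endpoint as the paper's proof (link is a simply connected space of constant curvature $1$, hence the round $S^3$, hence the cone is $\mathbb{R}^4$), but it gets the constant-curvature conclusion by a genuinely different route. The paper works entirely inside Alexandrov geometry: the link has curvature $\geq 1$ because $M$ has nonnegative curvature, and curvature $\leq 1 + \epsilon$ for every $\epsilon > 0$ because a small triangle in the link violating the upper bound would produce a triangle near $x$ violating the flatness of the pinched neighborhood; the squeeze $1 \leq \kappa \leq 1 + \epsilon$ then gives $\kappa \equiv 1$. You instead observe directly that flatness of $U \setminus \{x\}$ forces the link itself to be a smooth Riemannian $3$-manifold (any point of the link lies on a ray of nonsingular points of the flat cone, so the link is a smooth hypersurface in a flat space) whose induced metric has constant curvature $+1$ by the standard cone/base relation, and then apply Killing--Hopf. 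This is a bit more elementary and has the pleasant feature that it never uses the nonnegative-curvature hypothesis on $M$, which the paper's lower bound $\kappa \geq 1$ does use.

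One inconsistency to fix: your opening reduction to codimension $4$ invokes Lemma~\ref{lem:no-codimension-3}, which in this paper depends on $M$ being polyhedral K\"ahler and hence ultimately on the $S^2 \times S^2$ hypothesis; but your closing sentence asserts the argument uses neither. The reduction is in fact unnecessary --- your cone analysis applies verbatim to an isolated singularity of any codimension --- so you should simply drop the first paragraph, which also restores the generality you claim at the end. The paper itself does not perform any such reduction. Also, strictly speaking $x$ need not be a vertex of the given triangulation, but every point of a PL-manifold has a conical neighborhood over its link, so the cone metric $dr^2 + r^2 g_L$ is available regardless; you might note this rather than assume $x$ is a vertex.
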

\begin{proof}
The link at $x$ is homeomorphic to $S^3$, thus simply connected. It is also
a space of curvature $\kappa \ge 1$ because $M$ itself has nonnegative curvature. Moreover, the link at $x$ is a space of curvature $\kappa \le 1 + \epsilon$ for any $\epsilon > 0$. This is true, since a sufficiently small triangle in the link witnessing $\kappa \nleq 1 + \epsilon$ would also witness that a pinched neighborhood of $x$ (that is supposed to be flat) is not a space
of nonpositive curvature --- a contradiction. 

Because of the properties of Alexandrov spaces of curvature bounded from above,
$1 \le \kappa \le 1 +\epsilon$ for any $\epsilon > 0$ does imply that
$\kappa \equiv 1$ for the link. So the link at $x$ is a simply connected
space of constant curvature $1$, thus the standard $3$-sphere. The cone
over this sphere is $\mathbb{R}^4$ and so $x$ is not a singularity, as claimed.
\end{proof}

Note that this argument fails in the two-dimensional case ($S^1$ is not simply connected). Indeed, singularities of a $2$-dimensional PL-manifold are always of the highest possible codimension and yet always isolated from one another.

At this point, we can conclude that the singular locus of $M$ consists of several triangles that are also faces in the simplicial decomposition of $M$,
where the vertices of these triangles may be singularities of codimension $4$,
but at all other points the singular locus is a flat $2$-dimensional manifold.
It turns out that the singularities have to be aligned in accord with the two parallel 2-distributions that we have just found.

\begin{lem}
\label{lem:codim-2-singularity-aligned}
Let $x \in M_s$ be a singularity of codimension $2$. By definition, $M$ can be factored near this singularity as $C \times \mathbb{R}^2$ (this factoring is unique --- just take all geodesics passing through the origin).
Then the fibers of this factoring are parallel (respectively, perpendicular) to the two 2-distributions found above. 
\end{lem}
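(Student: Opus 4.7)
The plan is to pin down the globally defined parallel 2-distributions $\alpha, \beta$ of Claim~\ref{clm:two-fields-of-planes} near $x$ using the holonomy coming from the conical singularity. Let $U$ be a small neighborhood of $x$ on which $M$ factors as $C \times \mathbb{R}^2$, with $C$ a flat 2-cone of cone angle $\theta' \in (0, 2\pi)$. The product structure itself gives two parallel orthogonal 2-distributions on the nonsingular part $U \setminus M_s = (C \setminus \{o\}) \times \mathbb{R}^2$: let $V_1$ denote the distribution tangent to the $C$-factor and $V_2$ the one tangent to the $\mathbb{R}^2$-factor, with associated area forms $\omega_{V_1}, \omega_{V_2}$. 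It suffices to show that, as unoriented distributions, $\{\alpha, \beta\} = \{V_1, V_2\}$ throughout $U \setminus M_s$.

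Next I would compute the local holonomy of $U \setminus M_s$. Its fundamental group is generated by the loop encircling the apex in $C$, and parallel transport along this loop acts on $T_y M = V_1 \oplus V_2$ at a base point $y$ as $R_{\theta' - 2\pi} \oplus I$. Since $\theta' \not\equiv 0 \pmod{2\pi}$, the rotation $R_{\theta'-2\pi}$ has no nonzero fixed vector in $V_1$. Decomposing $\Lambda^2(V_1 \oplus V_2)^* = \Lambda^2 V_1^* \oplus (V_1^* \otimes V_2^*) \oplus \Lambda^2 V_2^*$, the outer summands are fixed pointwise by the holonomy while the middle summand contains no nonzero invariant (tensor with a vector having no fixed direction). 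Hence the space of parallel 2-forms on $U \setminus M_s$ is exactly the two-dimensional subspace $\langle \omega_{V_1}, \omega_{V_2} \rangle$.

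The globally parallel 2-forms $\omega_\alpha, \omega_\beta$ (the area forms of $\alpha, \beta$, which are linearly independent in the space of parallel 2-forms on $M \setminus M_s$) restrict to parallel 2-forms on $U \setminus M_s$, so both lie in $\langle \omega_{V_1}, \omega_{V_2} \rangle$. But $\omega_\alpha$ is degenerate as an antisymmetric 2-form on $T_y M$, with kernel equal to $\alpha^\perp = \beta$, while a combination $a\,\omega_{V_1} + b\,\omega_{V_2}$ has Pfaffian $ab$ and so is nondegenerate unless $ab = 0$. Therefore $\omega_\alpha$ must be a nonzero scalar multiple of $\omega_{V_1}$ or of $\omega_{V_2}$, forcing $\alpha_y = V_1$ or $\alpha_y = V_2$ at each nonsingular $y \in U$. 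Continuity of $\alpha$ together with connectedness of $U \setminus M_s$ makes the choice uniform over $U$, and the orthogonal complement $\beta = \alpha^\perp$ then coincides with the remaining distribution. This yields exactly the claimed alignment of $\alpha, \beta$ with the fibers of the local product structure.

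The step I expect to be the main obstacle is setting up the local holonomy computation cleanly: one must justify that near a codimension-2 singularity the only nontrivial homotopy class of loops in $U \setminus M_s$ is the one circling the singular stratum (which uses that the singular locus is a flat 2-manifold transversal to $C$ at $x$, and that near $x$ no codimension-4 singularity intrudes), and that the induced parallel transport is indeed $R_{\theta' - 2\pi} \oplus I$ in the product frame. Once this is in place, the subsequent linear algebra is uniform and requires no special case for $\theta' = \pi$, since even $R_{-\pi} = -I$ on $V_1$ has no nonzero fixed vector and thus no invariant in $V_1^* \otimes V_2^*$.
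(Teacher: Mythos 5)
Your proof is correct and follows essentially the same strategy as the paper's: both rest on the fact that the local holonomy around the cone point acts as $R_{\theta' - 2\pi} \oplus I$ in a product frame, which forces any parallel (hence holonomy-invariant) $2$-form near $x$ to vanish on mixed pairs of vectors, so the degenerate forms $\omega_\alpha, \omega_\beta$ must be proportional to $\omega_{V_1}, \omega_{V_2}$ and their kernels align with the fibers. The paper phrases this more tersely via $\omega(u, v - \hat v) = 0$ rather than computing the full invariant subspace of $\Lambda^2(V_1 \oplus V_2)^*$ and invoking the Pfaffian, but the underlying mechanism is identical.
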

\begin{proof}
Consider an Euler vector field near $x$ stretching the metric away from the
singular locus. This vector field is parallel to the conical fibers and is directed away from the vertex of such a conical fiber. Let $\omega$ be 
a $2$-form such that its kernel is one of our 2-distributions.
$\omega$ is a parallel form, so in particular it is preserved by the holonomy, resulting from going around $x$ any number of times. Take any nonsingular point near $x$ and two tangent vectors,
parallel to one and (respectively) the other of the fibers of the unique local
product structure near $x$, say $u$ and $v$.

After going around $x$, $u$ is unchanged, but $v$ is turned by some angle and becomes $\hat{v}$. Yet $\omega(u, v) = \omega(u, \hat{v})$, so
$\omega(u, v - \hat{v}) = 0$. By choosing $v$ appropriately we can make
$v - \hat{v}$ to have any direction in its plane. Thus, 
$\omega(u, v) = 0$ if $u$ and $v$
are parallel to different fibers ($(C, \ast)$ and $(\ast, \mathbb{R}^2)$). Given 
that the kernel of $\omega$ is $2$-dimensional, it is easy to see that the kernel is indeed parallel to one of the fibers.
\end{proof}

\begin{lem}
\label{lem:classifying-codim-4-singularity}
Let $x \in M_s$ be a singularity of codimension $4$. Then this singularity
can be factored as a product of two conical singularities, aligned in accord with our parallel 2-distributions of oriented planes.
\end{lem}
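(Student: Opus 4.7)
The plan is to show that a small cone neighborhood $U \cong C(L)$ of $x$ splits isometrically as a direct product $C_\alpha \times C_\beta$ of two flat $2$-cones, whose vertex axes lie respectively in the $\beta$- and $\alpha$-directions. Equivalently, the link $L^3$ at $x$ (a topological $S^3$ endowed with a curvature-$\geq 1$ metric) must be a spherical join of two metric circles of lengths $\leq 2\pi$. By Lemma~\ref{lem:no-codimension-3}, Lemma~\ref{lem:flat-pinched-no-singularity}, and the structural paragraph preceding Lemma~\ref{lem:codim-2-singularity-aligned}, the singular locus in $U \setminus \{x\}$ is a union of finitely many flat $2$-disks meeting at $x$, cutting $L$ in a finite disjoint family of circles.

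Next, I use Lemma~\ref{lem:codim-2-singularity-aligned} to sort these singular $2$-disks into two classes. That lemma says each codim-$2$ singular disk $D$ near $x$ is fibered compatibly with the $(\alpha,\beta)$-decomposition, which because $D$ is $2$-dimensional forces it to be tangent to exactly one of the distributions. Call them $\alpha$-disks $\{D_i\}$ and $\beta$-disks $\{E_j\}$. On $U \setminus M_s$ the parallel orthogonal distributions $\alpha, \beta$ already give a local metric product structure. Because each $D_i$ is itself tangent to $\alpha$, this structure extends unobstructed across every $D_i$ (the codim-$2$ singularity is absorbed as a conical singularity inside the $\alpha$-leaf), yielding a well-defined $\alpha$-foliation on $U \setminus (\{x\} \cup \bigcup_j E_j)$ by flat, totally geodesic $2$-disks with possible interior conical points; symmetrically for $\beta$.

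I then pass to the leaf quotients. The $\alpha$-leaf space $U/\alpha$ is a $2$-dimensional flat length space with conical singularities at each $[E_j]$ and at $[x]$; a direct verification shows it is a flat $2$-cone $C_\beta$ of some total vertex angle $\theta_\beta \leq 2\pi$. Similarly $U/\beta \cong C_\alpha$. The local product charts assemble into a canonical map $\Phi\colon C_\alpha \times C_\beta \to U$ which is an isometry on the dense regular part and hence, by completion, a global isometry. Under $\Phi$ the disks $D_i$ become products of $C_\alpha$ with the vertex of $C_\beta$ (and vice versa for $E_j$), so the two conical factors are exactly aligned with the parallel $2$-distributions, as required.

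The main obstacle is the passage to the limit at $x$ itself: one must show that the $\alpha$-foliation defined on the punctured neighborhood really extends to a genuine fibration whose base is a $2$-cone with vertex at $[x]$, and that $\theta_\beta$ is strictly less than $2\pi$ (so that $x$ is actually a singular point of the $\beta$-factor, consistent with $x$ having codimension $4$). This is where nonnegative curvature is essential: the link $L$ has curvature $\geq 1$ and inherits two orthogonal parallel unit tangent distributions on the complement of a codim-$2$ subset, and one invokes the Alexandrov-geometric splitting principle that such a positively curved $3$-space must be a spherical join. The holonomy around each $\varepsilon_j = E_j \cap L$ preserves both distributions (since the forms $\omega_1, \omega_2$ are parallel), so no monodromy obstruction interferes with the join decomposition.
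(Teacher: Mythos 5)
Your high-level picture (build a map to a product of two cones via the $\alpha$- and $\beta$-structures near $x$) agrees with the paper's goal, but the route you take through leaf quotients is misconceived in a way that hides where the real work lies, and a key invocation is not a theorem.

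First, the quotient $U/\alpha$ is not what you say it is. If the desired splitting $U \cong C_1 \times C_2$ exists (with $C_1$ tangent to $\alpha$), then the $\alpha$-leaves are the slices $C_1 \times \{q\}$, so $U/\alpha \cong C_2$: its single conical point is the image of the $\alpha$-disk $D$ (equivalently of $x$), while a $\beta$-disk $E$ projects \emph{onto all of} $C_2$, not to a single point. So ``conical singularities at each $[E_j]$ and at $[x]$'' cannot be right; the $[E_j]$ are not points in the quotient at all. More importantly, the claim ``a direct verification shows it is a flat $2$-cone'' is exactly the content of the lemma and is not verified: before you know $U$ is a product, there is no reason the $\alpha$-leaves near $x$ should form a fibration whose base is a cone (an $\alpha$-leaf may wrap, may run into the vertex, may fail to have a unique nearest point on a singular component, etc.). Relatedly, you never show there are only two singular components (one $D$ and one $E$), nor that $\theta_\alpha,\theta_\beta < 2\pi$; both take an argument.

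Second, the step you flag as ``the main obstacle'' is outsourced to an ``Alexandrov-geometric splitting principle'' asserting that a curvature-$\geq 1$ space of dimension $3$ carrying two orthogonal parallel distributions off a codimension-$2$ set must be a spherical join. There is no such off-the-shelf theorem, and in fact proving this statement in the PL setting is essentially the entire proof. The paper instead analyzes the link directly: it shows shortest paths in $M$ between nonsingular points avoid $M_s$ (a nonnegative-curvature fact), that rays parallel to a fixed $2$-distribution form a convex subset of the link, hence that there are exactly two singular circles and each is a closed geodesic; that the distance between a nonsingular ray and either singular circle is strictly less than $\pi/2$; then builds the map to $C_1\times C_2$ via the two nearest-point projections onto the singular components, proves each projection is single-valued by a bisector argument, and verifies the map is a local isometry both at nonsingular points and across codimension-$2$ singular points before invoking simple connectedness. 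None of this content appears in your sketch, and ``isometry on the dense regular part $\Rightarrow$ global isometry by completion'' is not valid as stated (one needs to check the codimension-$2$ locus explicitly, as the paper does). Also a small slip: if $D_i$ is tangent to $\alpha$, the conical point at $D_i$ is absorbed into the $\beta$-leaf, not the $\alpha$-leaf (the $\alpha$-leaves are flat across $D_i$).
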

\begin{proof}
Recall that a codimension $4$ singularity cannot be isolated (Lemma~\ref{lem:flat-pinched-no-singularity}). As we know from the work of Dmitri Panov, in a $4$-dimensional polyhedral K\"ahler manifold we cannot have any singularities of codimension $3$ (Lemma~\ref{lem:no-codimension-3}). The locus of singularities of codimension $2$
is a flat $2$-dimensional manifold (as follows from the unique factoring of a codimension $2$ singularity), and also this locus is a subset of the $2$-skeleton of $M$. All that implies that the singular locus near $x$ is $x$ itself and several (finitely many) connected singular components of codimension $2$, all looking like pinched cones with $x$ in the center. 
 
Assume there is only one such connected component. Recall that it has to be parallel to one of our 2-distributions (Lemma~\ref{lem:codim-2-singularity-aligned}). All holonomies near $x$ can be generated by going around singularities of codimension $2$. In particular, the tangent vectors from $T(M \backslash M_s)$ that are parallel to the other 2-distribution are preserved by all holonomies
near $x$. So we can find a parallel vector field
near $x$ that is also parallel to one of the $2$-distributions and hence to the
singular locus. Integrating this field we obtain infinite geodesics, so we can
split the singularity by the Splitting Theorem for Alexandrov spaces of nonnegative curvature, contradicting the assumption that $x$ has codimension $4$. This proves that there are at least two connected components of the singular locus of codimension $2$ and moreover that there is at least one component parallel to one $2$-distribution and at least one parallel to the other.

Now consider the link of $M$ at $x$ viewed as all rays emanating from $x$ in a cone over the link 
itself. 
The singular rays (by definition) are those without a neighborhood in the link isometric to
a piece of the standard $3$-sphere. Clearly, the singular rays comprise several
circles. These circles are closed geodesics. This can be checked locally at any point as all points
of a circle look the same, and so it suffices to notice that the locus of singularities in $M$ of codimension $2$ 
consists of singular triangles (except, perhaps, their vertices) that are simplicial faces of $M$ and thus 
totally geodesic. Moreover, the link is an Alexandrov space of curvature $\kappa \ge 1$.
Therefore, the distance from every ray (point in the link) to every singular circle is at most $\pi / 2$. If the distance between a ray $r$ and a singular circle is exactly $\pi / 2$, then this is the distance from $r$ to every ray compsiring this singular circle. 

The distance between two singular circles parallel to different 2-distributions
is precisely $\pi / 2$. This is because every shortest path in a neighborhood of $x$ in $M$ from a point on one
ray emanating from $x$ (belonging to one of the singular circles in the link) to a point on another ray emanating from $x$ (belonging to the other singular circle in the link) has
to pass through $x$. Otherwise this path would have to be orthogonal to one of the 2-distributions and parallel to the other, so anyway start from going along the original ray directly to $x$.  

The distance between any two rays is strictly less than $\pi$ (no geodesic passes through $x$ as the singularity is of codimension $4$ --- again using the Splitting Theorem for Alexandrov spaces of nonnegative curvature). So shortest paths in $M$ near $x$ correspond
to shortest paths between the corresponding rays in the link in the obvious
way (as in any cone of diameter $< \pi$). Notice also that singular points (from
$M_s$) near $x$ in $M$ belong to singular rays (in the sense of having no neighborhood in the link isometric to a piece of a standard sphere), while nonsingular points belong to nonsingular rays. 
$M$ is a PL-manifold of nonnegative curvature, so any shortest path in $M$ between two nonsingular points
consists entirely of nonsingular points. Consequently, any shortest path in the link between two
nonsingular rays may only contain nonsingular rays. Similarly, any shortest path between two
singular rays has only singular or only nonsingular intermediate rays. 

Some singular and some nonsingular rays are parallel to one of the 2-distributions. If two rays $r_1$ and $r_2$ are parallel to the same 2-distribution, so are all rays in any shortest path connecting them. Indeed, just consider the angular 
sector consisting of all rays in this path --- the angle between the rays is less than $\pi$ so the plane of the sector is precisely the plane of the
2-distribution in question. To put it more abstractly, all rays parallel to either of the 2-distributions form a convex subspace
of the link. One corollary is that the singular circles
are totally geodesic subspaces of the link. Another one is that there are only
two singular circles in the link (otherwise take two so that their rays are parallel to the same 2-distribution and connect any two rays from them by a path in the link).
Similary, no nonsingular ray can be parallel to either 2-distribution.
So the distance in the link from any nonsingular ray $r$ to either of the
two singular circles (any to any ray in these circles) is strictly less than
$\pi / 2$. Indeed, if it were $\pi / 2$ (to the closest and hence all rays in a singular circle --- recall
the remark about closed geodesics), the 
ray $r$ would be orthogonal to one and so parallel to the other 2-distribution.

All this implies that the singular locus near $x$ consists of $x$ itself and
two components of codimension $2$ that are pinched cones over the two
singular circles in the link. Taking any point near $x$, we can project it onto
both components (by finding the closest point). Since this is the same as finding the closest ray in a singular circle to a given ray, we can conclude that having $x$ as one of the two projections is equivalent to being a singular
point. Moreover, both projections are unique (and so the operations of taking both projections are well-defined). Indeed, asume that there were a nonsingular point $p$ near $x$ with two shortest paths from $p$ to one of the singular components, say $[pu]$ and $[pu']$. The angle between these two paths at $p$
cannot be $\pi$: we are looking for shortest paths from a point in a cone to some set of rays in this cone; any such shortest path should start from locally
decreasing the radial distance (coordinate along the rays of the cone) so two
shortest paths cannot run in the opposite directions. But if the angle is less than $\pi$, we use the same argument as before: $[pu]$ and $[pu']$ are orthogonal
to the same 2-distribution, hence $upu'$ defines a plane (leaf) that is parallel
to the other 2-distribution yet does not pass through $x$ ($p \in M$ is nonsingular, so the ray $[xp)$ is not parallel to either 2-distribution). It allows us to move $p$ along the
bisector of $upu'$, decreasing the distance from $p$ to the singular component in question and yet keeping the nonuniqueness of a shortest path. 
Eventually $p$ runs into the singular locus, but $upu'$ does not pass through $x$, so $p$ will run into a codimension $2$ singularity orthogonal to $upu'$ (from the component to which we are measuring distances). Yet when $p$ is close
to this singular component, the uniqueness of a shortest path is clear ---
a contradiction.

Therefore, we get a well-defined continuous mapping from a conical neighborhood of $x$ in $M$ into a product of two cones (via two projections). It sends $x$ to
the origin in the product and the rest of the singular locus into the two
cones (factors) in the product. Clearly, at any nonsingular point near $x$ in
$M$ this mapping is a local isometry (use 2-distributions). Now consider
a codimension $2$ singular point $p$ near $x$. What happens with this
mapping near $p$ --- is it a local isometry too? Take a nonsingular point
$u$ near $p$ so that $[up]$ is orthogonal to the singular component
containing $p$. The claim is that the length of $[up]$ is preserved under the projection ($p$ is projected into $x$ while $u$ is projected into some 
other point; we are only interested in one projection as the other projection
for $u$ and $p$ is the same: $p$). It is easy to see that $[up]$ is projected into a straight segment (i.e. a radial segment in a $2$-cone). Choose any $v \in [up]$ sufficiently close to $p$ and cover $[uv]$ with finitely many appropriate open neighborhoods --- the projection of $[uv]$ is (locally) a geodesic and hence a shortest path, since the projection is a radial segment. By continuity, the length of $[up]$ is preserved, too.

Now take two points $u$ and $u'$ near $p$ (still a codimension $2$ singular point) such that $[up]$ and $[u'p]$ are both orthogonal to the singular component
containing $p$ and project both onto the same cone as before (the other singular 
component). Draw segments along which we projected: $[uw]$ and $[u'w']$.
We already verified that the lengths stay the same: $|up| = |wx|$ and
$|u'p| = |w'x|$. Clearly, $[uw]$ and $[u'w']$ are parallel to the singular 
component along which we are projecting. If we start moving $u$ along
$[uw]$ and $u'$ along $[u'w']$, and also $p$ towards $x$, $|up|$ and $|u'p|$ stay the same and so the distance between $u$ and $u'$ locally stays the same, too!
Thus, by continuity (and compactness) the distance between $u$ and $u'$ is the
same as between $w$ and $w'$. This shows that our map is a local isometry
at codimension $2$ singularities as well.

Therefore, the map as defined on a pinched conical neighborhood of $x$ in $M$
(that is simply connected and a topological manifold) into a pinched
direct metric product of two appropriate cones is a local isometry and thus an
isometry.
\end{proof}

This completes the preliminary phase. The useful part of this analysis is summarized in the following lemma that will be used extensively in the final part of the argument.

\begin{lem}
\label{lem:local-product-decomposition-of-M}
Let $M$ be a PL-manifold of nonnegative curvature, homeomorphic to $S^2 \times S^2$.
Then at every point $p \in M$, $M$ can be locally represented in a unique way as
a product $C_1 \times C_2$ of two $2$-cones with conical angles $2 \pi \alpha_1 \leq 2 \pi$ and $2 \pi \alpha_2 \leq 2 \pi$ such that this decomposition is
aligned along our two 2-distributions. More precisely, for every
nonsingular point $(x, y)\in C_1 \times C_2$ near $p$ ($x \neq 0 \in C_1,
y \neq 0 \in C_2$) the two 2-distributions at $(x,y)$ are parallel to the
fibers $(C_1, \ast)$ and $(\ast, C_2)$, respectively. Lastly, there is
a uniform bound $\delta > 0$ such that every $p \in M$ has a neighborhood containing the ball $B_\delta(p)$ that again has a unique factoring with the factors aligned along the 2-distributions.
\end{lem}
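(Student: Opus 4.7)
The plan is to assemble this lemma from the case analysis already carried out in the previous results, and then extract the uniform $\delta$ from compactness of $M$. By Lemma~\ref{lem:no-codimension-3}, every $p \in M$ is either nonsingular, a codimension-$2$ singularity, or a codimension-$4$ singularity, so I would handle these three cases in turn. If $p$ is nonsingular, a small flat neighborhood decomposes as $\mathbb{R}^2 \times \mathbb{R}^2$ where the factors are the leaves of the two 2-distributions through $p$; this is a product of two 2-cones of full angle $2\pi$. If $p$ has codimension $2$, the defining factoring $C \times \mathbb{R}^2$ is unique, and Lemma~\ref{lem:codim-2-singularity-aligned} ensures its fibers align with the 2-distributions, with $C$ a 2-cone of angle $< 2\pi$. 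If $p$ has codimension $4$, Lemma~\ref{lem:classifying-codim-4-singularity} directly yields the aligned factoring $C_1 \times C_2$ with both conical angles possibly strictly less than $2\pi$.

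For the uniqueness part, I would use that the two 2-distributions are canonically determined parallel subbundles of $T(M \setminus M_s)$: at any nonsingular $(x,y)$ near $p$ with $x \neq 0 \in C_1$ and $y \neq 0 \in C_2$, the tangent plane to each fiber of an aligned factoring must coincide with the corresponding distribution. Since the two transverse foliations on the nonsingular part are thereby pinned down, and by continuity extend across the codimension-$2$ strata, the decomposition of a small neighborhood of $p$ is forced. The apices of the two 2-cones are then identified canonically as the projections of $p$ onto the two factors, which by the previous lemmas are precisely the sets where the respective foliations degenerate.

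Finally, for the uniform bound $\delta > 0$, I would invoke compactness of $M$ (it is closed, being homeomorphic to $S^2 \times S^2$). The pointwise construction above produces for each $p \in M$ an open neighborhood $U_p$ carrying the aligned product decomposition centered at $p$. Choose a finite subcover $\{U_{p_i}\}$ and let $\delta > 0$ be its Lebesgue number; then for every $q \in M$ the ball $B_\delta(q)$ is contained in some $U_{p_i}$, and uniqueness shows that the decomposition centered at $p_i$ contains, as a sub-decomposition covering $B_\delta(q)$, a factoring of the required form. The main obstacle I anticipate is verifying the compatibility of factorings centered at different base points --- that is, checking that the factoring centered at $p_i$, when viewed from a nearby base point $q$, is literally the same product of two 2-cones (with possibly relocated apices) as the one the existence argument produces at $q$ directly. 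This should follow from the explicit form of the three local models together with the invariance of the two 2-distributions under parallel transport, but it is the one place where care is needed.
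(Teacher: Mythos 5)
Your proposal is correct and follows essentially the same case-by-case structure as the paper's proof (nonsingular, codimension~$2$ via Lemma~\ref{lem:codim-2-singularity-aligned}, codimension~$4$ via Lemma~\ref{lem:classifying-codim-4-singularity}, no codimension~$3$ by Lemma~\ref{lem:no-codimension-3}). The one place you diverge is the uniform bound: you extract $\delta$ by a Lebesgue-number argument on a finite subcover, whereas the paper simply observes that $M$ is a \emph{finite} simplicial complex, so the local product model near any point is governed by one of finitely many simplex stars and a uniform $\delta$ is immediate; the paper's observation also renders moot the compatibility concern you flag at the end, since once uniqueness (alignment with the 2-distributions, or equivalently with the codimension-$2$ singular strata) is established, any two factorings agreeing on an overlap must coincide there, so passing from a factoring centered at $p_i$ to one at a nearby $q$ is automatic.
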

\begin{proof}
The flat case is obvious. The codimension $2$ case is handled by 
Lemma~\ref{lem:codim-2-singularity-aligned}. 
There is no codimension $3$ case (Lemma~\ref{lem:no-codimension-3}).
The existence of factoring in the
codimension $4$ case is handled by Lemma~\ref{lem:classifying-codim-4-singularity}. To prove uniqueness, identify factors
as codimension $2$ singularities.

The ``lastly'' part is clear, since $M$ is a finite simplicial complex. Note that if  $\delta$ is sufficiently small, the factors will still be $C_1 \times C_2$, yet now $p$ need not be the vertex of either cone.
\end{proof}

\subsection{The decomposition}

Recall that the goal is to decompose $M$ into a direct metric product.
Lemma~\ref{lem:local-product-decomposition-of-M} gives a local decomposition.
The rest of the argument is very similar to the de Rham decomposition theorem
(it is crucial that $M = S^2 \times S^2$ is simply connected).
This is not surprizing --- the holonomies on the nonsingular part $M \backslash M_s$ respect this local factorization, as it is aligned along the two 2-distributions (parallel fields of oriented $2$-planes, orthogonal to each other) found in the first part of this paper. 

Fix one of the two $2$-distributions mentioned throughout the text and call it $\alpha$. We are going to learn to integrate this distribution not just on $M \backslash M_s$, but integrate it in some sense on $M$. 
Take any point $x \in M$, possibly a singular point. Construct a ``leaf''
 $L_{\alpha}(x) \subset M$ --- the smallest subset of $M$ containing $x$ and closed under a certain operation. Start from adding 
$x$ to $L_{\alpha}(x)$. 
Use the local decomposition at $x$ given by Lemma~\ref{lem:local-product-decomposition-of-M} and choose the
fiber parallel to $\alpha$. Take the points in $M$ near $x$ that belong to this fiber and add them to $L_{\alpha}(x)$, too. Continue this operation until every point in $L_{\alpha}(x)$ is there with a neighborhood of its appropriate fiber,
parallel to $\alpha$. 

The resulting set $L_{\alpha}(x)$ is a $2$-dimensional topological manifold
``immersed'' in $M$, called the leaf of $\alpha$ passing through $x$.

\begin{lem}
\label{lem:leaf-is-a-sphere}
Any such leaf $L_{\alpha}(x) \subset M$ is a compact simply-connected $2$-dimensional PL-manifold of nonnegative curvature (thus, a convex polyhedron). (This is ``without loss of generality'': we actually prove that this is true for any $L_{\alpha}(x)$ or for any $L_{\beta}(x)$. Here $\beta$ is the other 2-distribution that is orthogonal to $\alpha$.)
\end{lem}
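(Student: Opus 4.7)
The plan is to establish four properties of $L_{\alpha}(x)$ in order: (i) it carries a natural 2-dimensional PL structure, (ii) it has nonnegative curvature, (iii) it is compact, and (iv) it is simply connected. Properties (i) and (ii) then combine with (iii) and (iv) so that Alexandrov's theorem on convex polytopes identifies $L_{\alpha}(x)$ with the boundary of a convex polyhedron in $\mathbb{R}^{3}$. The first two items are essentially free from Lemma~\ref{lem:local-product-decomposition-of-M}: at every $y\in L_{\alpha}(x)$ the local factorization $B_{\delta}(y)\cong C_{1}\times C_{2}$ aligned with the 2-distributions provides, by construction, a neighborhood of $y$ in $L_{\alpha}(x)$ that is exactly the $\alpha$-cone factor $C_{1}$; this is a 2-cone whose total angle is at most $2\pi$ (it is $< 2\pi$ only when $y$ is a codimension-$4$ singularity of $M$), so $L_{\alpha}(x)$ is a polyhedral 2-manifold with nonnegative curvature in the Alexandrov sense.

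For compactness (iii), I would equip $L_{\alpha}(x)$ with its intrinsic path metric (shortest paths among paths lying in $L_{\alpha}(x)$) and show it is complete; since $L_{\alpha}(x)$ sits inside the compact $M$, completeness combined with being locally compact forces the leaf to be bounded and closed, hence compact. The key ingredient is the \emph{uniform} $\delta>0$ of Lemma~\ref{lem:local-product-decomposition-of-M}: a Cauchy sequence $\{y_{n}\}\subset L_{\alpha}(x)$ converges in $M$ to some $y$, and for $n$ large both $y_{n}$ and the relevant connecting sub-arcs of the leaf lie in one product neighborhood $B_{\delta}(y)\cong C_{1}\times C_{2}$. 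Writing $y_{n}=(u_{n},z_{n})$ with $z_{n}\to 0$, I would argue that the set $S\subset C_{2}$ of second coordinates of $L_{\alpha}(x)\cap B_{\delta}(y)$ is closed, using that the tails of the Cauchy sequence must connect nearby fibers $C_{1}\times\{z_{n}\}$ via short paths in the leaf and that the only way to do so inside $B_{\delta}(y)$ is to stay on a single $\alpha$-fiber. Thus $0\in S$, i.e.\ the $\alpha$-fiber through $y$ is a neighborhood of $y$ in $L_{\alpha}(x)$, and $y\in L_{\alpha}(x)$.

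Simple connectedness (iv) is the main obstacle and is where I would use that $M$ is homeomorphic to $S^{2}\times S^{2}$ in an essential way. Given a loop $\gamma\colon S^{1}\to L_{\alpha}(x)$, it is null-homotopic in $M$, so it bounds a continuous disk $D\colon D^{2}\to M$. The strategy is to push $D$ fiberwise into $L_{\alpha}(x)$: cover the compact image $D(D^{2})$ by finitely many product neighborhoods $B_{\delta}(p_{i})\cong C_{1}^{i}\times C_{2}^{i}$ supplied by Lemma~\ref{lem:local-product-decomposition-of-M}, and in each one define a ``$\beta$-projection'' $\pi_{i}$ onto the particular $\alpha$-fiber belonging to $L_{\alpha}(x)$ (such a fiber exists on overlaps of these neighborhoods with $L_{\alpha}(x)$, which one ensures by choosing a Lebesgue number small enough that $\partial D^{2}$ meets every relevant chart). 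Uniqueness of the local $\alpha/\beta$-factorization in Lemma~\ref{lem:local-product-decomposition-of-M} guarantees the $\pi_{i}$'s glue on overlaps, and applying them to $D$ produces a homotopy rel $\partial D^{2}$ into $L_{\alpha}(x)$.

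The hardest step will clearly be this last one, and specifically the gluing of the local $\beta$-projections across overlaps where the disk $D$ may wander away from $L_{\alpha}(x)$. The global consistency comes from exactly two facts proved earlier: the 2-distributions $\alpha,\beta$ are \emph{globally} parallel on $M\setminus M_{s}$ (so the local product structures are canonically aligned), and the factorization is unique at every point including singularities of $M$. Together with a standard Lebesgue-number compactness argument on $D^{2}$, this is the mechanism producing the desired null-homotopy, and thereby $\pi_{1}(L_{\alpha}(x))=1$.
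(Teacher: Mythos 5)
Your outline is sound for parts (i) and (ii), and the verification that the leaf is a 2-dimensional PL-manifold of nonnegative curvature is the same as the paper's. But both of your remaining steps have genuine gaps, and the paper avoids them by a different route.

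For compactness, the claim that ``completeness combined with being locally compact forces the leaf to be bounded and closed, hence compact'' is false for an immersed leaf of a foliation. A dense geodesic line on a flat torus is a complete, locally compact, immersed 1-dimensional leaf inside a compact manifold, but it is neither closed in the ambient space nor compact. Your Cauchy-sequence argument shows that the leaf is complete as a metric space in its own right, which is true, but that simply does not entail compactness; you would also have to rule out exactly the dense-line phenomenon, and nothing in your argument does so. The paper's route is: (a) show the leaf cannot be flat in its intrinsic metric, by pushing a flat leaf along a parallel normal direction until it hits a singularity of $M$ orthogonal to it (here the finiteness of the simplicial decomposition and the Gauss--Bonnet theorem for $M$ enter); (b) observe that the angular defects at the leaf's singularities come from a finite list and sum to at most $4\pi$, so there are only finitely many singularities; (c) show every point of the leaf lies within a uniform distance $D$ of some singularity (otherwise a limiting point would produce a nonsingular leaf, contradicting (a)); (d) conclude that the leaf has finite area, hence is compact. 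Some version of step (a) or (c) is unavoidable; your proposal omits it.

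For simple connectedness, the disk-filling argument is not carried through. Once the interior of the disk $D$ wanders away from $L_{\alpha}(x)$, the local product charts there contain a two-parameter family of $\alpha$-fibers and no canonical one to project onto; the parallelism of $\alpha$ tells you the fibers of nearby charts match up, but it does not single out \emph{which} fiber is ``the'' fiber of $L_{\alpha}(x)$. Making this precise essentially amounts to establishing a well-defined projection $M \to \mathrm{Leaves}$, which is what the remainder of the paper is devoted to proving (Lemma~\ref{lem:distances-between-leaves} and the final theorem), so using it here would be circular. The paper sidesteps this entirely: once the leaf is known to be a compact orientable $2$-dimensional PL-manifold of nonnegative curvature that is not flat, the (combinatorial) Gauss--Bonnet theorem forces positive Euler characteristic, hence the leaf is $S^{2}$, and in particular simply connected. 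That is both simpler and avoids the gluing problem.
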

\begin{proof}[Proof of lemma]
It is clear that $L_{\alpha}(x)$ is a $2$-dimensional PL-manifold of nonnegative
curvature from the way such leaves were defined. Using the lower bound $\delta$
in Lemma~\ref{lem:local-product-decomposition-of-M}, we see that $L_{\alpha}(x)$ has no boundary and is a complete metric space. The leaf is oriented as the
2-distribution $\alpha$ is oriented. To prove compactness we need the following:

\begin{clm}
There are no nonsingular leaves (in the sense of the intrinsic PL-metric). 
\end{clm}
\begin{proof}[Proof of claim]
Indeed, assume for a contradiction that $L_{\alpha}(x)$ is a flat
leaf in its intrinsic metric (while all its points may be singular in $M$).
Since the leaf has no boundary,
it may be a plane, a cylinder, or a flat torus. Every point $y \in L_{\alpha}(x) \subset M$ has a neighborhood from Lemma~\ref{lem:local-product-decomposition-of-M} that
contains the ball $B_\delta(y) \subset M$ (is not too small) and has a unique factoring,
where one of the factors is a neighborhood of $y$ in the leaf. Then the other
factor will be the same for all $y \in L_{\alpha}(x)$, and in a canonical way.
This is clear when the leaf is isometric to $\mathbb{R}^2$ and hence simply connected. For the cases of a
torus and a cylinder it becomes true if we view a torus (or a cylinder) as
the image of $\mathbb{R}^2$ ``immersed'' via a local isometry. 

This allows us to define a normal parallel field of directions on the leaf and move the leaf in this direction --- that is, any normal direction. 
Here it is crucial that $\delta$ is a uniform constant for all points in $M$, hence for all points in the leaf.
(Recall that we view a nonsingular leaf not just as a set, but as an ``immersion'' of a plane. So of course, during the movement a plane (the image) may become a torus, or vice versa.) What can be an obstacle for such an operation?

If the leaf is within distance $\delta$ from a codimension $4$ singularity or from a codimension $2$ singularity that is orthogonal (not parallel) to the leaf, the leaf itself must have a singularity (use 
Lemma~\ref{lem:local-product-decomposition-of-M}). Assume that it never happens.
If all codimension $2$ singularities are parallel to the leaf, without loss
of generality replace $\alpha$ with $\beta$, the orthogonal complement of $\alpha$. (So if we cannot prove the statement for any $L_{\alpha}(x)$, we will instead prove it for any $L_{\beta}(x)$.) 
Certainly, $M$ must have some
codimension $2$ singularities (it must have some singularities by the Gauss-Bonnet theorem, and then use Lemma~\ref{lem:flat-pinched-no-singularity} and Lemma~\ref{lem:no-codimension-3}). 
It is easy to argue that at any given moment all points of a leaf will be singularities of codimension $2$, or all points of a leaf will be nonsingular points from $M \backslash M_s$.
After moving the leaf around, it will 
span all of $M$ (contradicting the existence of singularities of codimension $2$ orthogonal to the leaf) or stop near such a singularity. Then the local factorization of $M$ near such a singularity will contradict the assumption that the leaf itself is nonsingular.
\end{proof}

So any leaf has singularities. However, any leaf can only have finitely many singularities. 
We can say that each singularity
``carries some angular defect'' that is the difference between the conical angle at this singularity
and $2 \pi$. Any such ``angular defect'' can only be a number from some fixed list of numbers between
$0$ and $2 \pi$, coming from the finite simplicial decomposition of $M$. In the case of a compact
leaf these angular defects add up to $4 \pi$. They cannot add up to more than that in the noncompact
case either. TO prove that, we can assume without loss of generality that the leaf is simply connected.
(If we conclude for a contradiction that the universal cover is compact, so is the leaf itself.)
A simply connected noncompact leaf is topologically a plane.
If the angular defects at different singularities 
add up to more than $2 \pi$, then the circumference 
of a sufficiently large circle around any point in this leaf (``plane'') 
decreases with some fixed rate as the radius increases, thus cannot
increase indefinitely. This implies finite diameter and hence compactness, leading 
to the desired contradiction. 

One can try to find a constant $D$ such that any point in $L_{\alpha}(x)$ is within
distance $D$ from some singularity in this leaf in the intrinsic metric.
If this is possible, choose a sequence of points $c_n \in L_{\alpha}(x)$ that
are further than $n$ from any singularity in this leaf. $M$ is compact,
so choose a converging subsequence $c_{n_k} \to c \in M$. Again using the local
decomposition of $M$ one can see that the leaf $L_{\alpha}(c)$ has no
singularities --- a contradiction.

So, every point in the leaf $L_{\alpha}(x)$ is not further than
$M$ from some singularity and there are finitely many such singularities --- say, $q$. The leaf has nonnegative curvature, so its area is at most $q \pi D^2 < \infty$. Finite
area clearly implies compactness. Compactness, nonnegative curvature and
orientability imply that the leaf is homeomorphic to $S^2$.
\end{proof}

We are going to focus on the set of all such leaves in $M$.

\begin{lem}
\label{lem:distances-between-leaves}
For any $x, y \in M$, 

$dist(L_{\alpha}(x), L_{\alpha}(y)) = dist(x, L_{\alpha}(y)) = dist(L_{\alpha}(x), y)$.
\end{lem}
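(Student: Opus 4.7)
The plan is to reduce the three-way equality to showing that on a fixed $\alpha$-leaf the distance-to-$L_\alpha(y)$ function is constant. The inequalities $dist(L_\alpha(x), L_\alpha(y)) \le dist(x, L_\alpha(y))$ and $dist(L_\alpha(x), L_\alpha(y)) \le dist(L_\alpha(x), y)$ are automatic, since $x \in L_\alpha(x)$ and $y \in L_\alpha(y)$. So it suffices to show that the function $f : L_\alpha(x) \to \mathbb{R}_{\ge 0}$, $f(p) = dist(p, L_\alpha(y))$, is constant: then $dist(x, L_\alpha(y)) = f(x) = \inf_{p \in L_\alpha(x)} f(p) = dist(L_\alpha(x), L_\alpha(y))$. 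Because $L_\alpha(x)$ is connected (in fact a sphere, by Lemma~\ref{lem:leaf-is-a-sphere}), I only need $f$ to be locally constant.

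To prove local constancy at $p \in L_\alpha(x)$, I would pick a minimizing geodesic $\gamma : [0, T] \to M$ from $p$ to some $q \in L_\alpha(y)$ realizing $f(p)$, which exists by compactness. The key structural claim is that $\gamma$ lies entirely in a single ``$\beta$-fiber,'' i.e.\ moves only in the $\beta$-direction. Cover the image of $\gamma$ by finitely many local product charts $U_i \cong C_1^{(i)} \times C_2^{(i)}$ from Lemma~\ref{lem:local-product-decomposition-of-M}, with $\alpha$ aligned to the first factor and $\beta$ to the second; in each chart $\gamma|_{U_i}$ splits as a pair of geodesics $(\gamma_1^{(i)}, \gamma_2^{(i)})$. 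Since $\gamma$ minimizes the distance from $p$ to the leaf $L_\alpha(y)$, its endpoint velocity at $q$ is orthogonal to $L_\alpha(y)$, hence tangent to $C_2^{(N)}$; so $\gamma_1^{(N)}$ has zero initial velocity in the last chart and is constant. Uniqueness of the local product structure propagates this backward across chart overlaps: every $\gamma_1^{(i)}$ is constant.

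Once this is established, a shift operation finishes the proof. For $p' \in L_\alpha(x)$ inside $U_1$, writing $p = (a, b_0)$ and $p' = (a', b_0)$ in the first chart, define a new curve $\gamma'$ by replacing $a$ with $a'$ in $U_1$ and propagating this $\alpha$-displacement through the subsequent charts via the canonical product structures; the uniform lower bound $\delta$ in Lemma~\ref{lem:local-product-decomposition-of-M} guarantees that small $\alpha$-shifts fit inside each chart. Since the metric in each $U_i$ is a product, $\gamma'$ has the same length as $\gamma$, and its endpoint lies in $L_\alpha(y)$ because the slice $L_\alpha(y) \cap U_N$ has the form $C_1^{(N)} \times \{b_1\}$ and is preserved by $\alpha$-translation. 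So $f(p') \le f(p)$, and symmetry gives equality.

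The main obstacle I anticipate is handling singular points along $\gamma$, especially when $p$ or $q$ is a cone vertex of its leaf. At such points the ``orthogonal to the leaf'' argument above has to be replaced by an Alexandrov first-variation computation in the local cone, but the product-geodesic decomposition and the $\alpha$-shift construction remain meaningful because Lemma~\ref{lem:local-product-decomposition-of-M} provides a unique and globally coherent product structure at every point of $M$. A related subtlety is ensuring that when $\gamma$ crosses a codimension-$2$ singularity, the $\alpha$-shift defined in the two adjacent charts agrees on the overlap; this follows again from the uniqueness clause of that lemma, which identifies the $C_1$ factors of overlapping charts canonically.
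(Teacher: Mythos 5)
Your proposal follows essentially the same route as the paper's proof: both reduce the three-way equality to showing $dist(\cdot, L_\alpha(y))$ is locally constant on $L_\alpha(x)$, both realize the distance by a geodesic, argue via first variation that it arrives orthogonal to $L_\alpha(y)$ and hence stays in a $\beta$-fiber throughout, and then slide it along the $\alpha$-direction using the uniform local product structure to match nearby points. You spell out the chart-by-chart propagation and the singular-point caveat more explicitly than the paper, which simply asserts that the geodesic ``goes along this fiber all the way,'' but the core argument is identical.
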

\begin{proof}
Suffices to show that for all $x, \hat{x}, y \in M$ 
such that $L_{\alpha}(x) = L_{\alpha}(\hat{x})$, 
$dist(x, L_{\alpha}(y)) = dist(\hat{x}, L_{\alpha}(y))$. This can
be proved locally, for $x$ and $\hat{x}$ close to each other. 
The leaves are compact, so for a given $x$ we can find $z \in L_{\alpha}(y)$
closest to $x$: $dist(x, z) = dist(x, L_{\alpha}(y))$. Take any geodesic
$[xz]$. It arrives to $z$ parallel to one of the fibers of the local
product decomposition from Lemma~\ref{lem:local-product-decomposition-of-M}, for
otherwise it would not be a geodesic.
Hence, it goes along this fiber all the way from $x$ to $z$. 

Pick any $\hat{x}$ from the same leaf ($L_{\alpha}(x) = L_{\alpha}(\hat{x})$) that is close to $x$: $dist(x, \hat{x}) < \delta$ where $\delta$ is the constant
from Lemma~\ref{lem:local-product-decomposition-of-M}. Only choose 
$\hat{x}$ such that $dist(x, \hat{x}) = dist_{leaf}(x, \hat{x})$ --- they are
equally close in the intrinsic metric of the leaf. Then we can easily move
the geodesic $[xz]$ using the local product structure (chosen canonically
at all points) to obtain a segment $[\hat{x} \hat{z}]$ of the same length.
Therefore, $dist(\hat{x}, L_{\alpha}(y)) \le dist(x, L_{\alpha}(y))$, and
this implies what we need.
\end{proof}

Consequently, all leaves form a connected metric space $Leaves$ with the metric
\begin{equation*}
dist_{Leaves}(L_{\alpha}(x), L_{\alpha}(y)) =^{def}= dist_M (x, L_{\alpha}(y))
\end{equation*}
This metric is strictly intrinsic --- for two leaves $l_1$ and $l_2$ it is
easy to find $l_3$ in between: $dist_{Leaves}(l_1, l_2) / 2 =
dist_{Leaves}(l_1, l_3) = dist_{Leaves}(l_2, l_3)$.
Since $M$ is a disjoint union of different leaves,
this yields a natural mapping $M \mapsto Leaves$ where $x$ is sent to
$L_{\alpha}(x)$. This mapping is continious (because it is $1$-Lipshitz) and onto, so $Leaves$ is compact. $Leaves$ is also simply connected: any loop in 
$Leaves$ can be lifted to a path $\gamma:[0,1] \mapsto M$ with $\gamma(0)$
and $\gamma(1)$ in the same leaf: $L_{\alpha}(\gamma(0)) = L_{\alpha}(\gamma(1))$.
Connect $\gamma(0)$ with $\gamma(1)$  by a path in this leaf and contract the 
resulting loop in $M$.

This allows us to sharpen the statement of Lemma~\ref{lem:local-product-decomposition-of-M}. 

\begin{lem}
\label{lem:improved-local-product-decomposition-of-M}
There is $\epsilon > 0$ (smaller than $\delta$ from Lemma~\ref{lem:local-product-decomposition-of-M}) such that
for every point $p \in M$, a neighborhood
of $p$ in $M$ can be factored (again along the 2-distributions) into
a product of the $\epsilon$-neighborhood of $p$ in $L_{\alpha}(p)$ with
the $\epsilon$-neighborhood of $L_{\alpha}(p)$ in $Leaves$.
\end{lem}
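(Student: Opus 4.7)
The plan is to start from the local product decomposition of Lemma~\ref{lem:local-product-decomposition-of-M} and then choose $\epsilon$ small enough (uniformly in $p$, by compactness of $M$) so that the two local factors coincide with small intrinsic balls in the leaf $L_\alpha(p)$ and in $Leaves$, respectively. Fix $p \in M$ and invoke Lemma~\ref{lem:local-product-decomposition-of-M} to identify a neighborhood of $p$ with an open set in $C_1 \times C_2$ aligned with the two $2$-distributions, containing $B_\delta(p)$. Write $F_\alpha = C_1 \times \{*\}$ for the local fiber through $p$ parallel to $\alpha$ and $F_\beta = \{*\} \times C_2$ for the transverse fiber parallel to $\beta$.

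First I would show that for $\epsilon$ small enough, $F_\alpha \cap B_\epsilon(p)$ coincides with the intrinsic ball $B_\epsilon^{L_\alpha(p)}(p)$. The inclusion $F_\alpha \cap B_\epsilon(p) \subset L_\alpha(p)$ follows from the recursive construction of the leaf, and since $F_\alpha$ is totally geodesic in the local product, its intrinsic metric agrees with the ambient one near $p$. The reverse inclusion is the subtle point: the compact leaf might in principle re-enter $B_\epsilon(p)$ along a different sheet. Because $L_\alpha(p)$ is a compact polyhedral $2$-sphere (Lemma~\ref{lem:leaf-is-a-sphere}) immersed in $M$ by local isometries, there is a positive "injectivity radius" $\epsilon(p)$ at which $L_\alpha(p) \cap B_{\epsilon(p)}(p)$ consists of only the single sheet $F_\alpha \cap B_{\epsilon(p)}(p)$.

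Next, for the transverse factor, I would show that $F_\beta \cap B_\epsilon(p)$ parametrizes the $\epsilon$-ball of $L_\alpha(p)$ in $Leaves$. For $q \in F_\beta$, Lemma~\ref{lem:distances-between-leaves} gives $dist_{Leaves}(L_\alpha(p), L_\alpha(q)) = dist_M(p, L_\alpha(q))$, and the product structure forces this to equal $dist_M(p, q)$, since a shortest path from $p$ to $L_\alpha(q)$ must arrive orthogonally to $L_\alpha(q)$ and hence run inside $F_\beta$. Conversely, any leaf within $Leaves$-distance $\epsilon$ of $L_\alpha(p)$ is hit by $F_\beta$: a realizing short geodesic in $M$ must reach the nearby leaf along the $\beta$ direction, so after sliding its initial point along $F_\alpha$ it can be taken to emanate from $p$ and hence lie entirely in $F_\beta$. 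This exhibits $F_\beta \cap B_\epsilon(p)$ in bijective isometry with an $\epsilon$-ball in $Leaves$.

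To make $\epsilon$ uniform in $p$, I would use compactness of $M$: the local product data from Lemma~\ref{lem:local-product-decomposition-of-M} vary continuously with $p$ outside the finite set of codimension-$4$ singularities, so $\epsilon(p)$ is lower semi-continuous and attains a positive minimum. I expect the main obstacle to be precisely the injectivity-radius step for leaves: one must combine the uniform $\delta$ of Lemma~\ref{lem:local-product-decomposition-of-M} with the compact polyhedral $2$-sphere structure of each leaf to rule out self-approach of $L_\alpha(p)$ on arbitrarily small scales, and to do so uniformly as $p$ ranges over $M$ (including points where $L_\alpha(p)$ passes near codimension-$4$ singularities from its two sides). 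Once this is settled, the assembly of the leaf-ball with the $Leaves$-ball into the required product is immediate from the product structure supplied by Lemma~\ref{lem:local-product-decomposition-of-M}.
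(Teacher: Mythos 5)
Your proof follows essentially the same approach as the paper's (brief) proof: control the self-approach of the compact leaf $L_\alpha(p)$ to get the $\alpha$-factor, use Lemma~\ref{lem:distances-between-leaves} to identify the transverse factor with a ball in $Leaves$, and invoke compactness of $M$ for uniformity of $\epsilon$. The concerns you raise about the leaf's injectivity radius and uniformity near codimension-$4$ singularities are legitimate points the paper glosses over (it asserts the optimal $\epsilon$ is $3$-Lipschitz in $p$ without elaboration), but your resolution via the compact polyhedral sphere structure and lower semi-continuity is the intended one.
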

\begin{proof}
Since all leaves are compact, they have finite area and a sufficiently small neighborhood of $p$ will intersect $L_{\alpha}(p)$ only along the fiber of
the decomposition parallel to $\alpha$ (and not along several parallel fibers).
Choose such an $\epsilon$ and use $\epsilon / 3$ to make sure the distances between different leaves measured within this neighborhood are indeed true distances.

It is easy to choose a uniform $\epsilon$ for all points, as the maximal $\epsilon$ that works for a given $p$ is a $3$-Lipschitz function of $p$, and 
$M$ is compact. All details follow easily from Lemma~\ref{lem:distances-between-leaves}.
\end{proof}

It remains to argue that this gives us the desired decomposition.

\begin{thm}
$M$ is a direct metric product of any leaf $L_{\alpha}(x)$ with the space of all leaves, $Leaves$.
\end{thm}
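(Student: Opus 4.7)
My plan is to mimic the de Rham splitting theorem. Fix a basepoint $x_0 \in M$ and let $L := L_{\alpha}(x_0)$. I will construct a candidate isometry
\[
\Phi \colon L \times Leaves \longrightarrow M
\]
by horizontal lifting, prove it is a local isometry using Lemma~\ref{lem:improved-local-product-decomposition-of-M}, and then use simple connectivity to upgrade a local isometry to a global one.

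First, I would define $\Phi$ as follows. Given $(p, \ell) \in L \times Leaves$, pick a continuous path $\gamma \colon [0,1] \to Leaves$ from $L_{\alpha}(x_0)$ to $\ell$. Subdivide $[0,1]$ finely enough that each $\gamma([t_i, t_{i+1}])$ lies inside one of the product neighborhoods supplied by Lemma~\ref{lem:improved-local-product-decomposition-of-M} (this is possible by compactness of $[0,1]$ together with the uniform lower bound $\epsilon$). In each such neighborhood the local product structure gives a canonical ``horizontal'' lift, i.e.\ a curve perpendicular to the current $\alpha$-leaf and projecting to $\gamma|_{[t_i, t_{i+1}]}$. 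Concatenate these pieces, starting at $p$, to produce a horizontal path $\tilde\gamma$ in $M$, and set $\Phi(p, \ell) := \tilde\gamma(1)$.

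The key step is showing $\Phi(p, \ell)$ is independent of the choice of $\gamma$. For two paths $\gamma_0, \gamma_1$ from $L_{\alpha}(x_0)$ to $\ell$, I would use simple connectivity of $Leaves$ (established just before Lemma~\ref{lem:improved-local-product-decomposition-of-M}) to pick a homotopy $H \colon [0,1]^2 \to Leaves$ between them, refine it to a grid whose plaques each fit inside one of the $\epsilon$-product neighborhoods, and run the standard monodromy argument: on each plaque the two edge-wise lifts agree because the neighborhood is a genuine metric product, so after finitely many plaques the endpoints of the lifts of $\gamma_0$ and $\gamma_1$ coincide. Once $\Phi$ is well-defined, the same local product description makes it clear that $\Phi$ is a local isometry: on each $B_\epsilon(p) \times B_\epsilon(\ell) \subset L \times Leaves$ it agrees on the nose with the splitting from Lemma~\ref{lem:improved-local-product-decomposition-of-M}.

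Finally, I would finish as follows. Both $L$ and $Leaves$ are compact ($L$ is a convex polyhedral sphere by Lemma~\ref{lem:leaf-is-a-sphere}, and $Leaves$ is the continuous image of the compact $M$), so $L \times Leaves$ is compact. A surjective local isometry from a compact length space to a connected complete length space is a covering map; surjectivity is immediate because every point of $M$ lies in some leaf. Since $M \cong S^2 \times S^2$ is simply connected, the covering $\Phi$ has to be a single-sheeted homeomorphism, and a bijective local isometry is automatically a global isometry, yielding the desired direct metric product decomposition. The main obstacle I expect is the well-definedness of $\Phi$ in the monodromy step, particularly verifying that the homotopy refinement still works when plaques pass through codimension~2 or codimension~4 singular strata; the uniform constant $\epsilon$ from Lemma~\ref{lem:improved-local-product-decomposition-of-M} is precisely what makes this uniform refinement legitimate.
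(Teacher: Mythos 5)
Your proposal takes essentially the same de Rham--style route as the paper: construct a map $L \times Leaves \to M$ by horizontal lifting via the local product charts of Lemma~\ref{lem:improved-local-product-decomposition-of-M}, use simple connectivity of $Leaves$ for the monodromy step, and simple connectivity of $M$ to pass from a local isometry to a global one. The paper packages the monodromy a bit differently (first proving the tube $Z = f^{-1}(U) \cong l \times U$ isometrically, then observing all leaves are canonically isometric, then globalizing), but the key lemmas and the underlying logic are the same.
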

\begin{proof}
Let $\epsilon$ be as in Lemma~\ref{lem:improved-local-product-decomposition-of-M}.
Pick any leaf $l \in Leaves$ and let $U$ be the $\epsilon / 2$-neighborhood
of $l$ in $Leaves$. Let $Z = f^{-1}(U) \subset M$ be the set of all points
in $M$ that are closer to the leaf $l$ than $\epsilon / 2$. Here $f$ is
the projection $M \mapsto Leaves$ sending $x$ to $L_{\alpha}(x)$.
It is clear from Lemma~\ref{lem:distances-between-leaves} and 
Lemma~\ref{lem:improved-local-product-decomposition-of-M} that
for every $x \in Z$ there is exactly one $y \in l$ closest to $x$
($dist(x, l) = dist(x, y)$). Hence, $Z$ is homotopy equivalent to $l$ and
as such is simply connected. 

Lemma~\ref{lem:improved-local-product-decomposition-of-M} gives a local
isometry of $Z$ with $l \times U$ (this isometry is well-defined as
$l$ is simply connected), and this is also a global isometry (as
both spaces are simply connected --- $U$ is an $\epsilon / 2$ - neighborhood
of a point in a $2$-cone). 

This implies that all leaves are isometric ($l$ is isometric to all leaves
in $U$, and $Leaves$ is connected). Any curve in $Leaves$ defines an
isometry between the two leaves it connects. This isometry is
trivial for a closed curve that is shorter than $\epsilon / 2$ and
$Leaves$ is simply connected. Hence, all leaves are isometric to
each other in a canonical way (fix a leaf $l_0$ and connect it
to ever other leaf via any curve). So $M$ is locally isometric
to $l_0 \times Leaves$ and both sides are simply connected, hence it
is indeed a true isometry.
\end{proof}

We have established that $M \simeq L \times Leaves$, where $L$ is a convex
polyhedron in $\mathbb{R}^3$ (Lemma~\ref{lem:leaf-is-a-sphere}). $Leaves$ is a PL $2$-dimensional manifold of nonnegative curvature (from local product structure --- Lemma~\ref{lem:improved-local-product-decomposition-of-M}). We also know that $Leaves$ is connected, simply connected and compact
(see remarks after the space $Leaves$ was defined). So it is also a convex
polyhedron in $\mathbb{R}^3$, and we are done.

\bibliographystyle{alpha}
\bibliography{pl-bio}

\end{document}